\newtheorem{theorem}{Theorem}[section]
\newtheorem{lemma}{Lemma}[section]
\theoremstyle{remark}
\newtheorem{remark}{Remark}[section]
\theoremstyle{corollary}
\newtheorem{proposition}{Proposition}[section]
\title[Prescribed Gauduchon scalar curvature problem]{The prescribed Gauduchon scalar curvature problem in almost Hermitian geometry }
\author{Yuxuan Li}
\author{Wubin Zhou}
\author{Xianchao Zhou}
\address{School of Mathematical Science\\ Tongji University \\ Shanghai
200092, China} \email{1653454@tongji.edu.cn}
\address{  School of Mathematical Sciences\\ Tongji University \\
Shanghai 200092, China} \email{wbzhou@tongji.edu.cn}
\address{Department of Applied Mathematics\\ Zhejiang University of Technology\\ Hangzhou 310023,
China}\email{zhouxianch07@zjut.edu.cn}
\begin{document}
 \begin{abstract}
 In this paper we consider the prescribed Gauduchon scalar curvature problem on almost Hermitian manifolds. By deducing the expression of the Gauduchon scalar curvature under the conformal variation, the problem is reduced to solve a semi-linear partial differential equation with exponential nonlinearity. Using super and sub-solution method, we show that the existence of the solution to this semi-linear equation depends on the sign of a constant associated to Gauduchon degree. When the sign is negative, we give both necessary and sufficient conditions that a prescribed function is the Gauduchon scalar curvature of a conformal Hermitian metric. Besides, this paper recovers Chern  Yamabe problem, Lichnerowicz Yamabe problem and Bismut Yamabe problem. 
\end{abstract}
\maketitle

 %\tableofcontents
 
 \section{Introduction}

 Let $(M, J, h)$ be a compact almost Hermitian manifold of real dimension $2n$.  $J$ is an almost complex structure on the tangent bundle $TM$ which is compatible with the Hermitian metric $h$. Let $\nabla$ be the Levi-Civita connection with respect to $h$,   P. Gauduchon  in \cite{Gau2} introduced  one parameter family of canonical Hermitian connections $D^t$, namely, the Gauduchon connection as follows:
\begin{equation}\label{gco}
\begin{aligned}
&h(D_X^t Y,Z)
=h\bigl(\nabla_X Y-\frac{1}{2}J(\nabla_X J)Y,Z\bigr)\\
&+\frac{t}{4}h((\nabla_{JY}J) Z+J(\nabla_Y J)Z,X)-\frac{t}{4}h((\nabla_{JZ}J) Y+J(\nabla_Z J)Y,X),
\end{aligned}
\end{equation}
where $X, Y, Z$ are smooth vector fields on $M$ and $t\in \mathbb{R}$. Here are three important cases: $D^0$ is the first canonical Hermitian connection, also called the Lichnerowicz connection \cite{Kob};
$D^1$ is the second canonical Hermitian connection, also called the Chern connection as  it coincides
with the connection used by S. S. Chern \cite{Chern} in the integrable case;
$D^{-1}$ is the Bismut connection which, in the integrable case, is characterized by its torsion being skew-symmetric \cite{Bis}.

 Let $K^t$ be the curvature tensor and then one can define the \textbf{Gauduchon scalar curvature} by
$$s(t)=\sum_{i,j} K^t(u_{\bar{i}}, u_i, u_j, u_{\bar{j}})$$
where $\{u_i,u_{\bar{i}}\}_{i=1,\cdots,n}$ is a local unitary frame field adapted to almost Hermitian complex structure $J$. Since the conformal variation of the Hermitian metric $h$ does keep the compatibility with the almost complex structure $J$, it is natural to propose Gauduchon Yamabe problem: in a given conformal class $\{e^{u}h, u\in C^{\infty}(M)\}$, one would like to find a Hermitian metric with constant Gaucuchon scalar curvature. 
 When $J$ is integrable, $s(1)$ is Chern scalar curvature and this problem is  called Chern Yamabe problem \cite{ACS} . Whereas $s(-1)$ is Bismut scalar curvature and it is called Bismut Yamabe problem \cite{Bar}.   E. Fusi in \cite{Fus} extended Chern Yamabe problem to more general setting the prescribed Chern scalar curvature problem (also see \cite{Ho} for balanced background metrics).  For other similar prescribed scalar curvature problems on almost Hermitian manifolds we recommend the recent work \cite{GZ} by Zhou and Ge. 
 
In this paper we will study the following \textbf{prescribed Gauduchon scalar curvature problem}:
 \begin{quotation}
 For a given smooth function $\hat{s}(t)$ on the almost Hermitian manifold $(M, J, h)$, does $M$ admit  a  conformal Hermitian metric  $e^{u}h$ with Gauduchon scalar curvature $ \hat{s}(t)$?
 \end{quotation}
 
 In order to handle this problem, we first use the moving frame method to deduce the expression of  Gauduchon scalar  curvature  $\hat{s}(t)$ under the conformal variation $e^u h$ in Section 2. We  have (see Proposition \ref{cgsc1})
 \begin{equation}\label{egs}
 e^{u}\hat{s}(t) = s(t)+\frac{nt-t+1}{2} \Delta^{Ch} u.
 \end{equation}
Here $\Delta^{Ch}$ is Chern Laplace operator defined locally as $\Delta^{Ch}:=-2 h^{i\bar{j}}\partial_i\bar{\partial}_j$ and 
 $$\Delta^{Ch} =\Delta_d +\langle\alpha_F, d\cdot \rangle$$
 with Hodge Laplace operator $\Delta_d$ and the Lee form $\alpha_F$ .  
  
The variation of the Gauduchon scalar curvature is the same as the case when $J$ is integrable,  which has been obtained by G. Barbaro \cite{Bar} by using holomorphic coordinate system. For special case $t=1$, it is also gotten by M. Lejmi and M. Upmeier \cite{LeU} for almost integrable case  and by D. Angella, S. Calamai and C. Spotti \cite{ACS} for integrable case (for K\"ahler case by M.S. Berger in \cite{Berg}).     Then the prescribed Gauduchon scalar problem is transferred to solve the following semi-linear equation
\begin{equation}\label{seu}
\Delta_d u +\left\langle\alpha_F,du \right\rangle+\frac{2}{nt-t+1} s(t)=\frac{2\hat{s}(t)}{nt-t+1}  e^{u}
\end{equation}
 when $nt-t+1\neq 0$.
  Obviously when $t=\frac{1}{1-n}$, the equation (\ref{egs}) has a solution if and only if  $s(t)/\hat{s}(t)$ is positive and well defined at the zero of $\hat{s} (t)$. So we only consider the solvability (\ref{seu}) with $nt-t+1\neq 0$.
 
  Recall that in each conformal class $\{e^{u}h, u\in C^{\infty}(M)\}$ there is a unique Gauduchon metric  with the unit volume (see \cite{Gau1}).  For simplicity we can assume the background metric $h$ is just the Gauduchon metric with unit volume in this paper. The advantage is that,  for any smooth function $u$, 
  $$\int_M \left\langle\alpha_F, du\right\rangle dV  =0$$
where $dV$ is the volume form. This advantage ensures the solvability of the linear equation \eqref{leg} below.
  
We prefer to transfer (\ref{seu}) to another more concise form.  Let $g$ be a solution of the following linear equation
  \begin{equation}\label{leg}
  \Delta_d g + \left\langle \alpha_F,dg \right\rangle+\frac{2}{nt-t+1}  s(t)=\frac{2}{nt-t+1} \int_M s(t) dV. 
  \end{equation}
 Set $w=u-g$,  combine (\ref{seu}) and (\ref{leg}), the equation (\ref{seu}) can be transferred to the following crucial equation 
\begin{equation}\label{edw}
\Delta_d w+\left\langle\alpha_F, dw\right\rangle+c(t)=\varphi e^w
\end{equation}
with $$c(t)=\frac{2}{nt-t+1} \int_M s(t)  dV\ \  \text{and}\ \  \varphi=\frac{2e^g }{nt-t+1}\hat{s}(t).$$
It turns out that  the solvability of (\ref{edw}) depends on the sign of the constant $c(t)$ for each fixed $t$.  In fact, 
 \begin{equation}
  c(t)=\frac{2}{nt-t+1} \Gamma(t)
 \end{equation}
 and $\Gamma(t)= \int_M s(t)  dV$ can be regarded as a conformal invariant which depends on the almost complex structure and the conformal class.  

If the Gauduchon metric $h$ is balanced and then the Lee form $\alpha_F$ vanishes, the equation \eqref{edw} is just Kazdan -Warner equation $\Delta_d w +c=\varphi e^w$ (see \cite{KW, KW1}).  Because of this, we call \eqref{edw} as Kazdan-Warner type equation.  Kazdan-Warner type equation has been concerned by E. Fusi in \cite{Fus} to study the prescribed Chern scalar curvature problem on Hermitian manifolds  for $t=1$. Since the objective of this paper is to generalize all the result mentioned before, some of conclusions will recover E. Fusi's work. To avoid such repetition, we will study this equation more in depth form the point of view of both geometry and differential equations. For example, one can see the statements (a) and (d) of Theorem \ref{cphs}.

If $c(t)=0$ and $\hat{s}(t)=0$, then the equation (\ref{edw}) is linear.  We thus can solve the  Gauduchon Yamabe problem for zero Gauduchon scalar curvature as follows.
\begin{theorem}
If Gauduchon degree is zero, there are Hermitian metrics conformal to $h$ with zero Gauduchon scalar curvature.
\end{theorem}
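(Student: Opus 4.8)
The plan is to reduce the theorem to solving the linear PDE \eqref{edw} in the special case $c(t)=0$ and $\hat s(t)\equiv 0$, and then to exhibit a solution explicitly. First I would observe that ``Gauduchon degree zero'' means $\Gamma(t)=\int_M s(t)\,dV=0$, hence $c(t)=0$ by the displayed identity $c(t)=\tfrac{2}{nt-t+1}\Gamma(t)$ (recall we always assume $nt-t+1\neq 0$, the degenerate value $t=\tfrac{1}{1-n}$ being excluded). To produce a conformal metric $e^u h$ with zero Gauduchon scalar curvature we must, by \eqref{seu}, solve
\begin{equation}\label{zero-eq}
\Delta_d u+\langle\alpha_F,du\rangle+\frac{2}{nt-t+1}\,s(t)=0,
\end{equation}
since $\hat s(t)\equiv 0$ kills the right-hand exponential term. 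This is exactly the linear equation \eqref{leg} with its right-hand side equal to $\tfrac{2}{nt-t+1}\int_M s(t)\,dV=0$ because the Gauduchon degree vanishes. Equivalently, in the $w=u-g$ variables of \eqref{edw}, we get $\Delta_d w+\langle\alpha_F,dw\rangle=0$, for which $w\equiv 0$ (i.e. $u=g$) is a solution.

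The key step is therefore the solvability of the linear operator $L u:=\Delta_d u+\langle\alpha_F,du\rangle$. I would argue as follows. Because the background metric $h$ is chosen to be the Gauduchon metric of unit volume, we have $\int_M\langle\alpha_F,du\rangle\,dV=0$ for every $u\in C^\infty(M)$, as noted in the excerpt just before \eqref{leg}; combined with $\int_M\Delta_d u\,dV=0$ this shows that the image of $L$ is contained in the space of functions of zero mean, and the right-hand side $-\tfrac{2}{nt-t+1}s(t)$ of \eqref{zero-eq} has zero mean precisely because $\Gamma(t)=0$. Now $L$ is a second-order linear elliptic operator with smooth coefficients and no zeroth-order term, so its kernel consists only of constants (by the maximum principle, a function with $Lu=0$ attains its extrema and is therefore constant). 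By the Fredholm alternative for elliptic operators on a compact manifold, $L$ maps $C^{\infty}(M)$ onto the $L^2$-orthogonal complement of the kernel of its formal adjoint $L^{*}$; since the constant function $1$ is annihilated by $L^{*}$ (again using $\int_M\langle\alpha_F,du\rangle\,dV=0$, which says $L^{*}1=0$), that complement is exactly the space of zero-mean functions. Hence \eqref{zero-eq} admits a smooth solution $u$, unique up to an additive constant.

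Finally, given such a $u$, the conformal metric $e^{u}h$ is Hermitian (conformal changes preserve compatibility with $J$), and by the conformal variation formula \eqref{egs} its Gauduchon scalar curvature $\hat s(t)$ satisfies $e^{u}\hat s(t)=s(t)+\tfrac{nt-t+1}{2}\Delta^{Ch}u=s(t)+\tfrac{nt-t+1}{2}\big(\Delta_d u+\langle\alpha_F,du\rangle\big)=0$ by \eqref{zero-eq}, so $\hat s(t)\equiv 0$. Moreover the free additive constant in $u$ can be fixed to normalize the volume of $e^{u}h$ to $1$ if desired. I do not anticipate a genuine obstacle here: the only thing to check carefully is the mean-value bookkeeping — that the hypothesis ``Gauduchon degree zero'' is exactly what makes the source term orthogonal to $\ker L^{*}=\mathbb{R}\cdot 1$ — and that the Fredholm theory applies verbatim because the chosen Gauduchon background makes $L$ formally the divergence-type operator whose adjoint annihilates constants. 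In the body of the paper this last point is exactly the ``advantage'' sentence following \eqref{leg}, so the argument is essentially a one-line invocation of that remark together with standard elliptic theory.
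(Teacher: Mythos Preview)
Your proposal is correct and follows essentially the same approach as the paper: the paper observes (just before the theorem) that with $c(t)=0$ and $\hat s(t)=0$ the equation \eqref{edw} becomes linear, and its solvability is precisely Lemma~\ref{sec2}, whose proof is the Fredholm/adjoint argument you spell out. Your explicit identification $u=g$ (the solution of \eqref{leg}, whose right-hand side vanishes exactly because $\Gamma(t)=0$) is exactly the mechanism the paper has in mind.
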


When $c(t)<0$, we can prove the following theorem
 \begin{theorem} [See Remark \ref{re1}]\label{thm11}
Let $(M, J, h)$ be a compact almost Hermitian manifold of real dimenson $2n\geq 4$ with negative $c(t)$. Then for each given smooth function $\hat{s}(t)$, there is an associated constant $c(\hat{s}(t))$. If $c(\hat{s}(t))<c(t)<0$, then there is a Hermitian metric conformal to $h$ with Gauduchon scalar curvature  $\hat{s}(t)$. Whereas $c(t)<c(\hat{s}(t))$, it is impossible that $\hat{s}(t)$ is the Gauduchon scalar curvature function of a Hermitian metric in the conformal class of $h$. 
 \end{theorem}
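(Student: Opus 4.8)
The plan is to push the statement down to the Kazdan--Warner type equation \eqref{edw}. By the conformal variation formula \eqref{egs} and the change of unknown $w=u-g$ coming from \eqref{leg}, the metric $e^{u}h$ has Gauduchon scalar curvature $\hat s(t)$ if and only if $w$ solves \eqref{edw},
\[
\Delta_d w+\langle\alpha_F,dw\rangle+c(t)=\varphi\, e^{w},\qquad \varphi:=\frac{2e^{g}}{nt-t+1}\,\hat s(t),
\]
and here $c(t)<0$ by hypothesis. Write $L:=\Delta_d+\langle\alpha_F,d\,\cdot\,\rangle$. Since $h$ is the unit-volume Gauduchon metric, the Lee form $\alpha_F$ is co-closed, so $\int_M L v\,dV=0$ for every $v$, $\ker L$ consists of the constants, and $L+\Lambda$ is invertible on H\"older spaces for each constant $\Lambda>0$. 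I would let $c(\hat s(t))$ be the threshold
\[
c(\hat s(t)):=\inf\Bigl\{\,c<0:\ \eqref{edw}\text{ with this constant admits a solution }w\in C^{\infty}(M)\,\Bigr\}
\]
(equivalently, integrating \eqref{edw}, the infimum of $\int_M\varphi\, e^{w}\,dV$ over all such solutions), and then check separately that this coincides with the variational/Kazdan--Warner quantity attached to $\hat s(t)$. With this choice the impossibility half is the easy one: if $e^{u}h$ realizes $\hat s(t)$ then integrating \eqref{edw} over $M$ gives $c(t)=\int_M\varphi\, e^{w}\,dV\ge c(\hat s(t))$, so $c(t)<c(\hat s(t))$ cannot happen; one also records that $c(\hat s(t))=-\infty$ precisely in the unobstructed situations (for instance $\varphi\le 0$, $\varphi\not\equiv 0$), so that the stated condition is not vacuous.

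For the existence half, assume $c(\hat s(t))<c(t)<0$ and run the sub- and super-solution method for \eqref{edw}. A subsolution is immediate: for a constant $A$ large enough, $L(-A)+c(t)=c(t)<0\le\varphi\, e^{-A}$ holds uniformly (as $\varphi e^{-A}\to 0$), so $w_-\equiv-A$ is a subsolution, and it may be pushed below any given function. For the supersolution, first note that the set of admissible constants is an up-set: if $w_1$ solves \eqref{edw} for some $c_1\in(c(\hat s(t)),c(t))$ --- and such a $c_1$ exists by definition of the infimum --- then $Lw_1+c(t)=\varphi\, e^{w_1}+(c(t)-c_1)>\varphi\, e^{w_1}$, so $w_1$ is a supersolution of \eqref{edw} for $c(t)$; enlarging $A$ gives an ordered pair $w_-\le w_1=:w_+$. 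The genuinely substantive point is to produce at least one admissible constant in the first place when $\hat s(t)$ changes sign, and for that I would solve an auxiliary linear equation $L\xi=f$ (solvable by Fredholm once $\int_M f\,dV=0$) with $f$ bounded below by a positive constant on a neighbourhood of $\{\varphi\ge 0\}$ and sharply negative on a region where $\varphi$ is most negative, and take $w_+=\lambda\xi+\sigma$ with $\lambda$ large and $\sigma$ a normalizing constant (say $\xi\le 0$ on $\{\varphi>0\}$); verifying the supersolution inequality on $\{\varphi\le 0\}$ and on $\{\varphi>0\}$ separately is exactly where $c(t)$ above the threshold is used. Given the ordered pair, the monotone iteration $(L+\Lambda)w_{k+1}=\Lambda w_k+\varphi\, e^{w_k}-c(t)$ with $\Lambda>0$ large converges from $w_-$ to a solution $w\in[w_-,w_+]$, Schauder estimates bootstrap $w$ to $C^{\infty}$, and $e^{u}h$ with $u=w+g$ is the desired metric.

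The main obstacle is precisely this supersolution construction when $\varphi$, equivalently $\hat s(t)$, changes sign: on $\{\varphi\le 0\}$ one wants $w_+$ large so that $\varphi e^{w_+}$ is strongly negative, while on $\{\varphi>0\}$ the requirement $Lw_+ +c(t)\ge\varphi e^{w_+}>0$ forces $w_+$ to be strongly $L$-superharmonic there, and since $\int_M Lw_+\,dV=0$ these two demands compete; the number $c(\hat s(t))$ is exactly the borderline at which they can be met. I also expect the identification of $c(\hat s(t))$ with an explicit conformal quantity, and the verification that the admissible set actually accumulates at its infimum, to absorb most of the technical effort --- consistent with the statement being flagged as a consequence (see Remark \ref{re1}) of a more detailed analysis. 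Finally, uniqueness of the conformal metric, if wanted, follows from a maximum-principle comparison for \eqref{edw} using $c(t)<0$, although the theorem only asserts existence.
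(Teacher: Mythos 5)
Your proposal is correct and follows essentially the same route as the paper: reduce to the Kazdan--Warner type equation \eqref{edw}, use that a (super)solution for a constant $c_1<0$ remains a supersolution for every $c\in[c_1,0)$, define the threshold as the resulting critical value, and produce a solution by the monotone sub/supersolution iteration (this is exactly the content of Theorem \ref{thmUPPSUB} and Theorem \ref{thm1}(b), packaged in Remark \ref{re1}). Two remarks on where your write-up differs or slips. First, your displayed subsolution inequality $c(t)<0\le\varphi e^{-A}$ is false pointwise wherever $\varphi<0$; what you actually need (and what your parenthetical supplies) is $\varphi e^{-A}\ge-\|\varphi\|_{\infty}e^{-A}>c(t)$ for $A$ large. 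Second, you define $c(\hat s(t))$ as the infimum of the set of solvable constants, which makes the non-existence half tautological and reduces the existence half to the up-set property you prove correctly; the paper instead defines $c_{-}(\varphi)$ via constructibility of supersolutions and, crucially, exhibits an explicit supersolution $w_{+}=av+b$ with $\Delta_d v+\langle\alpha_F,dv\rangle=\varphi-\bar{\varphi}$ and $a>0$ small, which yields $c_{-}(\varphi)\le a\bar{\varphi}/2<0$ whenever $\bar{\varphi}<0$. You correctly flag this as the substantive constructive step but only sketch a vaguer substitute (the auxiliary $L\xi=f$ with sign conditions near $\{\varphi\ge 0\}$); for the conditional statement of Theorem \ref{thm11} this is not strictly needed, since the hypothesis $c(\hat s(t))<c(t)$ already forces the admissible set to be nonempty, but it is what makes the theorem non-vacuous, and the paper's construction is the clean way to carry it out.
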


Through the deep understanding of $c(\hat{s}(t))$,  we have the following sufficient  condition  for the  prescribed Gauduchon scalar curvature problem.
 \begin{theorem}[See Remark \ref{re2}]\label{thm12}
When $c(t)<0$,  $c(\hat{s}(t))=-\infty$ if and only if the nonzero function $\frac{\hat{s}(t)}{nt-t+1}  $ is nonpositive. That's to say, under the assumption $c(t)<0$, any nonzero smooth function $\hat{s}(t)$ with $\frac{\hat{s}(t)}{nt-t+1} \le 0$ is the Gauduchon scalar curvature of a unique Hermitian metric conformal to $h$. Besides,  the necessary condition $\int_M \frac{2e^g }{nt-t+1}\hat{s}(t)dV<0$ is not sufficient.
 \end{theorem}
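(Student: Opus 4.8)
The plan is to combine the dichotomy of Theorem \ref{thm11} with an analysis of the threshold constant $c(\hat s(t))$, which I recall is the infimum of those $c(t)<0$ for which the Kazdan--Warner type equation \eqref{edw} is solvable; thus $c(\hat s(t))\in[-\infty,0)$, and $\hat s(t)$ is the Gauduchon scalar curvature of a metric conformal to $h$ exactly when $c(t)>c(\hat s(t))$. Since $\varphi=\tfrac{2e^{g}}{nt-t+1}\hat s(t)$ and $e^{g}>0$, the function $\varphi$ has everywhere the same sign as $\tfrac{\hat s(t)}{nt-t+1}$; so the hypotheses ``$\hat s(t)\not\equiv 0$'' and ``$\tfrac{\hat s(t)}{nt-t+1}\le 0$'' are exactly ``$\varphi\le 0$ and $\varphi\not\equiv 0$''. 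The statement therefore reduces to: (i) $c(\hat s(t))=-\infty$ iff $\varphi\le 0$, $\varphi\not\equiv 0$; (ii) in that case the realizing conformal metric is unique; (iii) $\int_{M}\varphi\, dV<0$ is necessary for solvability of \eqref{edw} but not sufficient.

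For the ``if'' half of (i) and for (ii): assume $\varphi\le 0$, $\varphi\not\equiv 0$, fix any $c(t)<0$, and solve \eqref{edw} by the sub- and super-solution scheme of Theorem \ref{thm11}. A constant $\underline w\equiv -N$ is a sub-solution once $e^{-N}\max_{M}|\varphi|\le|c(t)|$. Because $\varphi$ may vanish on a set of positive measure, no constant is a super-solution, so I would instead solve the linear equation $\Delta_{d}\psi+\langle\alpha_{F},d\psi\rangle=|c(t)|+\tfrac{|c(t)|}{\int_{M}|\varphi|\,dV}\,\varphi$ --- solvable because the right-hand side has zero mean and $h$ is Gauduchon, and because $\int_{M}|\varphi|\,dV\neq 0$ --- and set $w_{+}=\psi+B$; a direct computation using $\varphi\le 0$ shows the super-solution inequality reduces to $\psi+B\ge\ln\!\big(|c(t)|/\!\int_{M}|\varphi|\,dV\big)$, which holds once $B$ is large. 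Monotone iteration between $\underline w\le w_{+}$ (valid once $N$ is large) produces a solution, so \eqref{edw} is solvable for every $c(t)<0$, i.e. $c(\hat s(t))=-\infty$; Theorem \ref{thm11} then yields a conformal Hermitian metric with Gauduchon scalar curvature $\hat s(t)$. For uniqueness, subtract two solutions $w_{1},w_{2}$: the difference satisfies $\Delta_{d}(w_{1}-w_{2})+\langle\alpha_{F},d(w_{1}-w_{2})\rangle=b\,(w_{1}-w_{2})$ with $b=\varphi\,\tfrac{e^{w_{1}}-e^{w_{2}}}{w_{1}-w_{2}}\le 0$ and $b\not\equiv 0$, and the maximum principle forces $w_{1}\equiv w_{2}$, hence the conformal metric is unique in its class.

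For the ``only if'' half of (i), I argue by contraposition: if $\varphi(x_{0})>0$ for some $x_{0}$, I must find an a priori bound $c(t)\ge -C(\varphi)$ holding for every solution $w$ of \eqref{edw}, so that \eqref{edw} is unsolvable for $c(t)<-C(\varphi)$ and hence $c(\hat s(t))\ge -C(\varphi)>-\infty$. Testing \eqref{edw} against $1$ and against $e^{-w}$, and using in both that $\int_{M}\langle\alpha_{F},d(\,\cdot\,)\rangle\, dV=0$ for the Gauduchon metric $h$, gives $\int_{M}\varphi e^{w}\, dV=c(t)$ and $\int_{M}\varphi\, dV=c(t)\int_{M}e^{-w}\,dV-\int_{M}e^{-w}|dw|^{2}\,dV\le c(t)\int_{M}e^{-w}\,dV<0$; the latter already proves that $\int_{M}\varphi\, dV<0$ is necessary, which is part of (iii). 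To upgrade it to a bound depending on $\varphi$ alone one must exploit $\varphi\ge\delta>0$ on a ball to control $\int_{M}w\,dV$ from above --- a Moser--Trudinger/Harnack estimate in the spirit of Kazdan--Warner and of Fusi's analysis of the case $t=1$. I expect this a priori estimate to be the main obstacle.

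Finally, for the ``not sufficient'' clause of (iii): by (i) and the already-proved necessity, a non-realizable $\hat s(t)$ with $\int_{M}\varphi\, dV<0$ must have $\varphi$ sign-changing, and by Theorem \ref{thm11} it is enough to exhibit such a $\varphi$ with $c(\hat s(t))>c(t)$ for the given fixed $c(t)<0$. Following the Kazdan--Warner construction adapted to \eqref{edw}, I would take $\varphi$ predominantly positive --- a positive constant off a small ball $V$ and a large negative constant on $V$ --- with the size of $V$ and of that constant tuned so that $\int_{M}\varphi\, dV<0$ while the threshold of the resulting equation stays above $c(t)$ (that such $\varphi$ have threshold near $0$ is a refinement of the estimate of the previous paragraph). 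For this $\varphi$, Theorem \ref{thm11} shows \eqref{edw} has no solution, so $\hat s(t)=\tfrac{nt-t+1}{2e^{g}}\varphi$ is not the Gauduchon scalar curvature of any metric conformal to $h$, although $\int_{M}\tfrac{2e^{g}}{nt-t+1}\hat s(t)\, dV=\int_{M}\varphi\, dV<0$; this shows the necessary condition is not sufficient.
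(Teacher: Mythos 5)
Your ``if'' half and your uniqueness argument are sound. The super-solution $w_{+}=\psi+B$ with $\Delta_{d}\psi+\langle\alpha_{F},d\psi\rangle=|c(t)|+\tfrac{|c(t)|}{\int_{M}|\varphi|\,dV}\varphi$ does have a zero-mean right-hand side (unit volume, $\int\varphi=-\int|\varphi|$), the resulting inequality reduces exactly as you say, and monotone iteration then applies; this is a legitimate variant of the paper's choice $w_{+}=av+b$ with $\Delta_{d}v+\langle\alpha_{F},dv\rangle=\varphi-\bar\varphi$. The maximum-principle uniqueness via the coefficient $b=\varphi\,\tfrac{e^{w_{1}}-e^{w_{2}}}{w_{1}-w_{2}}\le 0$, $b\not\equiv 0$ is correct and in fact supplies a detail the paper asserts but never writes out. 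Your integration of the equation against $e^{-w}$ also correctly establishes the necessity of $\int_{M}\varphi\,dV<0$.

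The two remaining pieces, however, are genuine gaps, and in both places you have guessed a harder route than the one the paper takes. For the ``only if'' direction you stop at ``I expect this a priori estimate to be the main obstacle'': no Moser--Trudinger or Harnack estimate is needed. The paper's mechanism is (1) the necessary condition from Theorem \ref{thm1}(a) that the unique solution $\phi_{c}$ of the \emph{linear} equation $\Delta_{d}\phi+\langle\alpha_{F},d\phi\rangle-c\phi+\varphi=0$ must be positive whenever \eqref{KWte} is solvable (obtained by substituting $v=e^{-w}$ and comparing with $\phi_{c}$ by the maximum principle), combined with (2) the asymptotic Theorem \ref{asymptotictheorem}, which gives $c\,\phi_{c}(x)\to\varphi(x)$ uniformly as $c\to-\infty$; if $\varphi(x_{0})>0$ then $\phi_{c}(x_{0})<0$ for $c$ sufficiently negative, so solvability fails and $c_{-}(\varphi)>-\infty$. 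Your pointwise-sign information on $\varphi$ never enters your integral identities, which is why your route stalls. For the ``not sufficient'' clause you again lean on the unproven quantitative threshold estimate for concentrated $\varphi$; the paper instead reverse-engineers the example: pick $\psi$ with $\bar\psi=0$, $\psi\not\equiv 0$, a small $\alpha>0$ so that $\psi+\alpha$ changes sign, and set $\varphi=-\Delta_{d}\psi-\langle\alpha_{F},d\psi\rangle+c(\psi+\alpha)$. Then $\bar\varphi=c\alpha<0$, yet the solution of the linear test equation is exactly $\psi+\alpha$, which changes sign, so the necessary positivity condition fails and \eqref{KWte} is unsolvable. Both missing steps hinge on the same tool --- the auxiliary linear equation and its positivity/asymptotics --- which is absent from your proposal.
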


The cases $c(t)>0$ or $c(t)=0$ with $\hat s(t)\neq 0$ are difficult since it is hard to obtain $C^0$  estimate. When $M$ is a Riemannian surface, one can use Moser-Trudinger inequality and the variational method as \cite{KW}. But when $n\geq 2$, the equations (\ref{seu}) or (\ref{edw}) are both super critical equations.  In fact, when $c(t)>0$, one can construct a sequence of functions to prove that the energy functional is not bounded below when using variational method (In \cite{CSZ}, a family of Lipschitz functions were constructed to prove that in this case the lower bound does not exist).  It reminds us to study these cases through other approaches such as fixed point theorem (Theorem \ref{smallosc}) and implicit theorem(Theorem \ref{implicit}).

The paper is organized as mentioned above. In Section 2, we recall almost Hermitian geometry and define two type scalar curvatures $s(t)$ and $s_2(t)$. By using moving frame method, we deduce the variation of curvature tensors under the conformal change and then get the conformal  variations of $s(t)$ and $s_2(t)$.  In Section 3, we give some preliminaries related to Kazdan-Warner type equation which will be used in Section 4.  Section 4 is our main part to study Kazdan-Warner type equation with  $c(t)<0$, we will prove Theorem \ref{thm11} and Theorem \ref{thm12}.  In the last section 5,  we will use implicit theorem and fixed point theorem to solve the Kazdan-Warner type equation under different assumptions but regardless of the sign of $c(t)$. 

 \section{Gauduchon scalar curvatures under conformal variations}

Let $(M,  J,  h)$ be a compact almost Hermitian manifold of complex dimension $n$ with almost complex structure $J$ and Hermitian metric $h$.  Here $h$ is compatible with the almost complex structure $J$. Then  the fundamental 2-form $F$ associated to $h$ and $J$ is defined by
\begin{equation*}
F(X,Y)=h(JX,Y)
\end{equation*}
for all $X,Y\in\Gamma(TM)$ and $TM$ is the tangent bundle of $M$. 
    Let $\delta^h$ be the codifferential operator, another important differential form on the almost Hermitian manifold $M$ is the Lee form defined by $$\alpha_F=J\delta^h F.$$ 
The metric $h$ is called Gauduchon  if  $\delta^h\alpha_F=0$.

With the Gauduchon connection (\ref{gco}) and the curvature tensor $K^t$, we can define two scalar curvatures $s(t)$ and $s_2(t)$ (see the equation (\ref{defst}) below).   We will exploit the moving frame method to deduce the variation of the scalar curvature, which turns out to be very effective in our study of curvatures in almost Hermitian geometry.

Following the formations in \cite{FuZ,Kob}, we first recall the structure equations of canonical connections $D^0$ and $D^1$. For convenience, we fix the index range $1\leq i,j,k,\dots\leq n$ and $1\leq A,B,C,\dots\leq 2n$. We use the Einstein summation convention, i.e., the repeated indices are summed over.

Let $\{e_i, e_{n+i}=Je_i\}_{i=1,2,...,n}$ be a local $J$-adapted orthonormal frame field on $(M,J,h)$. Its dual coframe field is denoted by
$\{\omega^1,\omega^2,...,\omega^{2n}\}$. Hence a unitary frame field is
$\{u_i=\frac{1}{\sqrt{2}}(e_i-\sqrt{-1}e_{n+i})\}_{i=1,2,\dots,n}$. Set $u_{\bar{i}}=\overline{u_i}$. We denote the unitary coframe field by $\{\theta^i=\frac{1}{\sqrt{2}}(\omega^i+\sqrt{-1}\omega^{n+i})\}_{i=1,2,\dots,n}$.
Let $\omega=(\omega_B^A)$ and $\Omega=(\Omega_B^A)$ be, respectively,
the connection and curvature form matrices of the Levi-Civita connection. Its structure equations are
\begin{equation*}
\begin{cases}
&d\omega^A=-\omega_B^A\wedge\omega^B,\\
&d\omega_B^A=-\omega_C^A\wedge\omega_B^C+\Omega_B^A,
\end{cases}
\end{equation*}
where $\omega_B^A+\omega_A^B=0$, $\Omega_B^A+\Omega_A^B=0$.

Set 
$\varphi_j^i=\frac{1}{2}(\omega_j^i+\omega_{n+j}^{n+i})+\frac{\sqrt{-1}}{2}(\omega_j^{n+i}-\omega_{n+j}^i)$ and $\mu_j^i=\frac{1}{2}(\omega_j^i-\omega_{n+j}^{n+i})+\frac{\sqrt{-1}}{2}(\omega_j^{n+i}+\omega_{n+j}^i).$
Then locally, $\varphi=(\varphi_j^i)$ is the matrix of the Lichnerowicz connection form. The structure equations of the Lichnerowicz connection are
\begin{equation}\label{csc2}
\begin{cases}
&d\theta^i=-\varphi_j^i\wedge\theta^j+\tau^i, \\
&d\varphi_j^i=-\varphi_k^i\wedge\varphi_j^k+\Phi_j^i,
\end{cases}
\end{equation}
where the $\tau^i=-\mu_j^i\wedge \overline{\theta^j}$ are the torsion forms and the $\Phi_j^i$ are the curvature forms.

The Chern connection is the unique Hermitian connection (making $J$ and $h$ parallel) whose torsion form has vanishing $(1,1)$-component. Locally, let $\psi=(\psi_j^i)$ be the matrix of the Chern connection form. Then the structure equations of the Chern connection are
\begin{equation}\label{csc4}
\begin{cases}
&d\theta^i=-\psi_j^i\wedge\theta^j+T^i,\\
&d\psi_j^i=-\psi_k^i\wedge\psi_j^k+\Psi_j^i,
\end{cases}
\end{equation}
where the $T^i$ are the torsion forms and the $\Psi_j^i$ are the curvature forms. As $T^i$ has no $(1,1)$-component, it can be written as
\begin{equation}\label{csc5}
T^i=\frac{1}{2}T_{jk}^i \theta^j\wedge\theta^k+\frac{1}{2}T_{\bar{j}\bar{k}}^i \overline{\theta^j}\wedge\overline{\theta^k}
\end{equation}
where $T_{jk}^i+T_{kj}^i=0$ and $T_{\bar{j}\bar{k}}^i+T_{\bar{k}\bar{j}}^i=0$. 
Using the Chern  torsion,  one can see the Lee form $\alpha_F$ can be rewritten as
\begin{equation}\label{csc6}
\alpha_F=T_{ji}^i\theta^j+\overline{T_{ji}^i}\overline{\theta^j},
\end{equation}
which will be useful later. Combing the torsion terms in (\ref{csc2}) and (\ref{csc4}), we have
\begin{equation}\label{csc7}
(\varphi_j^i-\psi_j^i)\wedge \theta^j+T^i-\tau^i=0.
\end{equation}
Since $\varphi_j^i+\overline{\varphi_i^j}=0$, $\psi_j^i+\overline{\psi_i^j}=0$, and $\mu_j^i+\mu_i^j=0$,
by comparing the types of forms in (\ref{csc7}), we get
\begin{equation}\label{csc8}
\varphi_j^i-\psi_j^i=\frac{1}{2}T_{jk}^i \theta^k-\frac{1}{2}\overline{T_{ik}^j} \overline{\theta^k}.
\end{equation}

Set $\gamma=(\gamma_j^i)=(\varphi_j^i-\psi_j^i)$. Locally, let $\psi(t)=(\psi_j^i(t))$ be the connection form matrix of the Gauduchon connections $D^t$. Then
$\psi(t)=\varphi-t\gamma$. The structure equations of $D^t$ are
\begin{equation}
\begin{cases}
&d\theta^i=-\psi_j^i(t)\wedge\theta^j+T^i(t),\\
&d\psi_j^i(t)=-\psi_k^i(t)\wedge\psi_j^k(t)+\Psi_j^i(t),
\end{cases}
\end{equation}
where the $T^i(t)=tT^i+(1-t)\tau^i$ are the torsion forms and the $\Psi_j^i(t)$ are the curvature forms. More precisely, the curvature forms $\Psi_j^i(t)$ can be written as
\begin{equation}
\Psi_j^i(t)=\frac{1}{2}K_{\bar{i}jkl}^t\theta^k\wedge\theta^l+K_{\bar{i}jk\bar{l}}^t\theta^k\wedge\overline{\theta^l}
+\frac{1}{2}K_{\bar{i}j\bar{k}\bar{l}}^t\overline{\theta^k}\wedge\overline{\theta^l}
\end{equation}
where $K_{\bar{i}jk\bar{l}}^t=K^t(u_{\bar{i}},u_j,u_k,u_{\bar{l}})$ and the others are similar.

By contracting the curvature tensor
$K^t$ of the  connection $D^t$ with $h$, two Hermitian scalar curvatures $s(t)$ and $s_2(t)$ are defined. In fact, for a given unitary frame field $\{u_i\}_{i=1,2,\dots,n}$, one can see 
\begin{equation} \label{defst}
s(t)= K^t(u_{\bar i}, u_i,u_j,u_{\bar j})=K^t_{\bar{i} i j \bar{j}}\quad  \textup{and}\quad  s_2(t)=K^t(u_{\bar i},u_j,u_i,u_{\bar j})=K^t_{\bar{i} ji\bar{j}}
\end{equation}
where the repeated indices are summed over.

Let $\hat{h}=e^{2f}h$ be a conformal change of the metric $h$,  where $f\in C^\infty(M)$.
It is obvious that $\{\hat{\theta}^i=e^f \theta^i\}_{i=1,2,\dots,n}$ give the corresponding local unitary coframe field on $(M,J,\hat{h})$.
We use the superscript $\hat{}$ to denote quantities related to the metric $\hat{h}$.

Next, we consider  the variations of curvature under a conformal change of
the Hermitian metric $h$. For the Lichnerowicz connection $D^0$
and the Chern connection $D^1$, S. Kobayashi \cite{Kob} obtained the explicit transformations
of the corresponding connection forms under a conformal change of the metric $h$, respectively.

Define $f_i$ and $f_{\bar{i}}$ by
$$df=\partial f+\bar{\partial} f=f_i\theta^i+f_{\bar{i}}\overline{\theta^i},$$
where $\partial f=f_i\theta^i$ is the $(1,0)$-part of $df$, and $\bar{\partial} f=f_{\bar{i}}\overline{\theta^i}$ is the $(0,1)$-part of $df$.
From the Theorem 5.2 in \cite{Kob}, the structure equations of the Lichnerowicz connection $\hat{D}^0$ of $\hat{h}$ are
\begin{equation}\label{clse}
\begin{cases}
&d\hat{\theta}^i=-\hat{\varphi}_j^i\wedge\hat{\theta}^j+\hat{\tau}^i,\\
&d\hat{\varphi}_j^i=-\hat{\varphi}_k^i\wedge\hat{\varphi}_j^k+\hat{\Phi}_j^i,
\end{cases}
\end{equation}
where $\hat{\varphi}_j^i=\varphi_j^i+f_j\theta^i-f_{\bar{i}}\overline{\theta^j}$
and $\hat{\tau}^i=e^f(\tau^i+f_{\bar{i}}\theta^j\wedge\overline{\theta^j}+\bar{\partial} f\wedge \theta^i).$

From Theorem 4.1 in \cite{Kob}, the structure equations of the Chern connection $\hat{D}^1$ of $\hat{h}$ are
\begin{equation}\label{ccse}
\begin{cases}
&d\hat{\theta}^i=-\hat{\psi}_j^i\wedge\hat{\theta}^j+\hat{T}^i,\\
&d\hat{\psi}_j^i=-\hat{\psi}_k^i\wedge\hat{\psi}_j^k+\hat{\Psi}_j^i,
\end{cases}
\end{equation}
where $\hat{\psi}_j^i=\psi_j^i+(\partial f-\bar{\partial} f)\delta_j^i$ and $\hat{T}^i=e^f(T^i+2\partial f\wedge \theta^i).$

Using above structure equations (\ref{clse}) and (\ref{ccse}), we get the following structure equations of $\hat{D}^t$,
\begin{equation}\label{cgsc}
\begin{cases}
&d\hat{\theta}^i=-\hat{\psi}_j^i(t)\wedge\hat{\theta}^j+\hat{T}^i(t),\\
&d\hat{\psi}_j^i(t)=-\hat{\psi}_k^i(t)\wedge\hat{\psi}_j^k(t)+\hat{\Psi}_j^i(t),
\end{cases}
\end{equation}
with $\hat{\psi}_j^i(t)=(1-t)\hat{\varphi}_j^i+t\hat{\psi}_j^i$ and $\hat{T}^i(t)=(1-t)\hat{\tau}^i+t \hat{T}^i$.  Moreover,
\begin{equation*}
\hat{\psi}_j^i(t)=\psi_j^i(t)+(1-t)(f_j\theta^i-f_{\bar{i}}\overline{\theta^j})+t(\partial f-\bar{\partial} f)\delta_j^i,
\end{equation*}
then together with (\ref{cgsc}), by direct computations, we have
\begin{align}\label{ce18}
\hat{\Psi}_j^i(t)&=\Psi_j^i(t)+td(\partial f-\bar{\partial}f)\delta_j^i
+(1-t)^2(f_k\theta^i-f_{\bar{i}}\overline{\theta^k})\wedge(f_j\theta^k-f_{\bar{k}}\overline{\theta^j}) \notag\\
&~~~~~~~~~~~~+(1-t)^2((\varphi_k^i-\psi_k^i)\wedge(f_j\theta^k-f_{\bar{k}}\overline{\theta^j})
+(f_k\theta^i-f_{\bar{i}}\overline{\theta^k})\wedge(\varphi_j^k-\psi_j^k))\notag\\
&~~~~~~~~~~~~+(1-t)(d(f_j\theta^i-f_{\bar{i}}\overline{\theta^j})+\psi_k^i\wedge(f_j\theta^k-f_{\bar{k}}\overline{\theta^j})
+(f_k\theta^i-f_{\bar{i}}\overline{\theta^k})\wedge \psi_j^k).
\end{align}

Using  the Chern connection, define $f_{jk}$, $f_{j\bar{k}}$, $f_{\bar{j}k}$ and $f_{\bar{j}\bar{k}}$ by
\begin{equation}\label{df19}
df_j-f_k\psi_j^k=f_{jk}\theta^k+f_{j\bar{k}}\overline{\theta^k}, ~~df_{\bar{j}}-f_{\bar{k}}\overline{\psi_j^k}=f_{\bar{j}k}\theta^k+f_{\bar{j}\bar{k}}\overline{\theta^k}.
\end{equation}
Note that $f_{j\bar{k}}=f_{\bar{k}j}$ and $2f_{i\bar{i}}=\langle dJdf, F\rangle$ for the Chern connection \cite{TWY}. In fact, $$-2f_{i\bar{i}}=\Delta^{Ch}f$$
where $\Delta^{Ch}$ is the Chern Laplacian with respect to the metric $h$ as in \cite{ACS,LeU}. Moreover,
$\Delta^{Ch}$ is related to the Hodge Laplacian $\Delta_d=d\delta^h+\delta^hd$ by
\begin{equation}\label{chlap}
-2f_{i\bar{i}}=\Delta^{Ch}f=\Delta_d f+\langle \alpha_F,df\rangle.
\end{equation}

By using the structure equation (\ref{csc4}) of the Chern connection, together with (\ref{df19}), we have
\begin{align}\label{E1}
d(\partial f-\bar{\partial}f)&=(df_i-f_j\psi_i^j)\wedge\theta^i-(df_{\bar{i}}-f_{\bar{j}}\overline{\psi_i^j})\wedge\overline{\theta^i}
+f_iT^i-f_{\bar{i}}\overline{T^i}\notag\\
&=(f_{ij}\theta^j+f_{i\bar{j}}\overline{\theta^j})\wedge\theta^i-(f_{\bar{i}j}\theta^j+f_{\bar{i}\bar{j}}\overline{\theta^j})\wedge\overline{\theta^i}
+f_iT^i-f_{\bar{i}}\overline{T^i},
\end{align}
and
\begin{align}\label{E2}
&d(f_j\theta^i-f_{\bar{i}}\overline{\theta^j})+\psi_k^i\wedge(f_j\theta^k-f_{\bar{k}}\overline{\theta^j})
+(f_k\theta^i-f_{\bar{i}}\overline{\theta^k})\wedge \psi_j^k \notag\\
&~~~~~~~=(df_j-f_k\psi_j^k)\wedge\theta^i-(df_{\bar{i}}-f_{\bar{k}}\overline{\psi_i^k})\wedge\overline{\theta^j}
+f_jT^i-f_{\bar{i}}\overline{T^j} \notag \\
&~~~~~~~=(f_{jk}\theta^k+f_{j\bar{k}}\overline{\theta^k})\wedge\theta^i
-(f_{\bar{i}k}\theta^k+f_{\bar{i}\bar{k}}\overline{\theta^k})\wedge\overline{\theta^j}
+f_jT^i-f_{\bar{i}}\overline{T^j}.
\end{align}

Put (\ref{E1}), (\ref{E2}), (\ref{csc5}) and (\ref{csc8}) into the expression of the curvature terms $\hat{\Psi}_j^i(t)$ in (\ref{ce18}), then comparing the types of forms in both sides of the obtained formula,  we can get the relation  between  the curvature components $\hat{K}_{\bar{i}jk\bar{l}}^t$ (resp. $\hat{K}_{\bar{i}jkl}^t$, $\hat{K}_{\bar{i}j\bar{k}\bar{l}}^t$)
and $K_{\bar{i}jk\bar{l}}^t$ (resp. $K_{\bar{i}jkl}^t$, $K_{\bar{i}j\bar{k}\bar{l}}^t$). Here, we only give the explicit formula of $\hat{K}_{\bar{i}jk\bar{l}}^t$ as follows,
\begin{align}
e^{2f}\hat{K}_{\bar{i}jk\bar{l}}^t&=K_{\bar{i}jk\bar{l}}^t-t(f_{k\bar{l}}+f_{\bar{l}k})\delta_j^i
+(1-t)^2(f_{\bar{i}}f_j\delta_l^k-f_pf_{\bar{p}}\delta_k^i\delta_l^j)\\
&\quad  \quad +\frac{(1-t)^2}{2}(f_{\bar{i}}T_{jk}^l+f_j\overline{T_{il}^k}
-f_p\overline{T_{pl}^j}\delta_k^i-f_{\bar{p}}T_{pk}^i\delta_l^j)\notag\\
&\quad \quad -(1-t)(f_{j\bar{l}}\delta_k^i+f_{\bar{i}k}\delta_l^j).\notag
\end{align}

Thus, using contractions, and combing (\ref{csc6}) and (\ref{chlap}), the two Hermitian scalar curvatures $\hat{s}(t)$ and $\hat{s}_2(t)$ satisfy
\begin{equation*}
e^{2f}\hat{s}(t)=K_{\bar{i}ij\bar{j}}^t-(nt+1-t)(2f_{i\bar{i}})
=s(t)+(nt-t+1)(\Delta_d f+\langle\alpha_F,df\rangle),
\end{equation*}
\begin{align*}
e^{2f}\hat{s}_2(t)&=K_{\bar{i}ji\bar{j}}^t+(n(1-t)+t)(-2f_{i\bar{i}})
+(1-t)^2(1-n^2)f_if_{\bar{i}}\\
& \ \ \ \ -(n+1)\frac{(1-t)^2}{2}(f_i\overline{T_{ij}^j}+f_{\bar{i}}T_{ij}^j)\notag\\
&=s_2(t)+(n(1-t)+t)\Delta_d f+\frac{(1-t)^2(1-n^2)}{2}\langle df, df\rangle\notag\\
&\ \ \ \ +(n(1-t)+t-(n+1)\frac{(1-t)^2}{2})\langle \alpha_F, df\rangle.\notag
\end{align*}

Finally we have the following proposition.
\begin{proposition}\label{cgsc1}
Let $(M,J,h)$ be an almost Hermitian manifold of complex dimension $n$. $\hat{h}=e^{2f}h$ is a conformal change of the metric $h$,  where $f\in C^\infty(M)$. $s(t)$ and $s_2(t)$  (resp. $\hat{s}(t)$ and $\hat{s}_2(t)$) are the two Hermitian scalar curvatures
of the Gauduchon connections $D^t$  of $h$ (resp. $\hat{D}^t$ of $\hat{h}$). Then
\begin{equation}\label{pros1}
e^{2f}\hat{s}(t)=s(t)+(nt-t+1)(\Delta_d f+\langle\alpha_F,df\rangle),
\end{equation}
\begin{align}
e^{2f}\hat{s}_2(t)&=s_2(t)+(n(1-t)+t)\Delta_d f+\frac{(1-t)^2(1-n^2)}{2}\langle df, df\rangle\\
&~~~~~~~+(n(1-t)+t-(n+1)\frac{(1-t)^2}{2})\langle \alpha_F, df\rangle,\notag
\end{align}
where $\alpha_F$ is the Lee form and $\Delta_d$ is the Hodge Laplacian of $h$, respectively.
\end{proposition}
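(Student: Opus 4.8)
The plan is to carry the moving-frame computation begun in the preceding displays to its conclusion; since \eqref{clse}--\eqref{E2} already assemble all the ingredients, the proof of the proposition is really the act of collecting them and performing two contractions. First I would take Kobayashi's conformal transformation laws \eqref{clse} and \eqref{ccse} for the Lichnerowicz and Chern connection forms as the starting point, and use the interpolation $\hat\psi_j^i(t)=(1-t)\hat\varphi_j^i+t\hat\psi_j^i$ together with $\hat T^i(t)=(1-t)\hat\tau^i+t\hat T^i$ to produce the structure equations \eqref{cgsc} of $\hat D^t$. Writing $\hat\psi_j^i(t)=\psi_j^i(t)+(1-t)(f_j\theta^i-f_{\bar i}\overline{\theta^j})+t(\partial f-\bar\partial f)\delta_j^i$, I would differentiate the first structure equation of \eqref{cgsc} and substitute into the second to express $\hat\Psi_j^i(t)$ as $\Psi_j^i(t)$ plus the quadratic and linear corrections displayed in \eqref{ce18}.

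Next I would introduce the second Chern-covariant derivatives $f_{jk},f_{j\bar k},f_{\bar jk},f_{\bar j\bar k}$ through \eqref{df19}, record the symmetry $f_{j\bar k}=f_{\bar kj}$ and the identity $-2f_{i\bar i}=\Delta^{Ch}f=\Delta_d f+\langle\alpha_F,df\rangle$ from \eqref{chlap}. Substituting the Chern structure equation \eqref{csc4}, the torsion expansion \eqref{csc5}, the connection difference \eqref{csc8}, and the auxiliary identities \eqref{E1} and \eqref{E2} into \eqref{ce18}, I would compare the coefficients of $\theta^k\wedge\overline{\theta^l}$ on the two sides to read off the $(1,1)$-type curvature component $\hat K_{\bar ijk\bar l}^t$ — precisely the formula displayed immediately before the proposition. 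The $\theta^k\wedge\theta^l$ and $\overline{\theta^k}\wedge\overline{\theta^l}$ components play no role in the scalar curvatures and can be ignored.

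Finally I would contract. Setting $i=j$ and summing, then $k=l$ and summing, in $\hat K_{\bar ijk\bar l}^t$ collapses every quadratic and torsion correction, leaving only $-(nt+1-t)\,(2f_{i\bar i})\,$; inserting \eqref{chlap} gives \eqref{pros1} at once. For $\hat s_2(t)$ I would instead contract over the index pattern defining $s_2(t)$ in \eqref{defst}, namely $k=i$ and $l=j$. Here the $(1-t)^2$ torsion traces $f_{\bar i}T_{jk}^l,\,f_j\overline{T_{il}^k},\dots$ reduce, after relabeling, to multiples of $f_i\overline{T_{ij}^j}$ and $f_{\bar i}T_{ij}^j$, which by \eqref{csc6} are exactly the components of $\langle\alpha_F,df\rangle$; the quadratic term $-f_pf_{\bar p}\delta_k^i\delta_l^j$ produces the factor $(1-n^2)$, and $(1-t)^2f_{\bar i}f_j\delta_l^k$ contributes the remaining piece of $\langle df,df\rangle$. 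Combining with \eqref{chlap} yields the second displayed formula.

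The main obstacle is this last step for $\hat s_2(t)$: one must track all the torsion-trace contractions carefully and verify that the numerical coefficients $(1-n^2)$ and $n(1-t)+t-(n+1)\tfrac{(1-t)^2}{2}$ emerge correctly, while simultaneously converting Chern Laplacians to Hodge Laplacians without sign errors. By contrast the formula \eqref{pros1} for $\hat s(t)$ is essentially immediate, since its contraction annihilates all the nontrivial corrections.
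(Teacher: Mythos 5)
Your proposal is correct and follows essentially the same route as the paper, whose ``proof'' of Proposition \ref{cgsc1} is precisely the Section 2 computation you describe: interpolating Kobayashi's transformation laws to get \eqref{ce18}, substituting \eqref{E1}--\eqref{E2} and comparing $(1,1)$-types to obtain $\hat K^t_{\bar ijk\bar l}$, then contracting over $j=i,\,l=k$ (where the quadratic and torsion traces indeed cancel) and over $k=i,\,l=j$ (where they survive with the stated coefficients). Your identification of the surviving torsion traces with $\langle\alpha_F,df\rangle$ via \eqref{csc6} and of the Laplacian conversion via \eqref{chlap} matches the paper exactly.
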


Let $u=2f$, we have scalar curvature equation (\ref{leg}). Then the prescribed Gauduchon curvatures problem can be transformed into solving the equation (\ref{leg}).  

In the end,  we introduce the notation of Gauduchon degree. In \cite{Gau} P. Gauduchon proved in each conformal class $[h]=\{e^{2f} h| f \in C^{\infty}(M)\}$ there is a unique Gauduchon metric up to a constant multiple.  Still we denote $h$ by the unique Gauduchon metric $h_0$ in the conformal class $[h]$ with unit volume form, then motivated by the Gauduchon degree in Hermitian geometry \cite{Gau1,ACS}, we
introduce the following two conformal invariants:
\begin{equation*}
\Gamma(t)=\Gamma(M,J,[h],D^t)=\int_M s(t)\frac{F^n}{n!}
\end{equation*}
and
\begin{equation*}
\Gamma_2(t)=\Gamma(M,J,[h],D^t)=\int_M s_2(t)\frac{F^n}{n!}.
\end{equation*}

\section{Some preliminaries related to Kazdan-Warner type equation}

In this section we give some preliminaries related to Kazdan and Warner's work in \cite{KW,KW1}.

We show the solvability of the linear equation (\ref{leg}) in Lemma \ref{sec2} below, then for a fixed $t$ we can transfer the prescribed Gauduchon scalar curvature problem to solve the following semi-linear differential equation, namely Kazdan-Warner type equation (\ref{edw}) on $M$
 \begin{equation}\label{KWte}
\Delta_{d} w +\left\langle\alpha_F, dw\right\rangle +c = \varphi e^{w}.
\end{equation}

When the dimension of $M$ is $2$,  $\alpha_F$ will vanish, and then the equation (\ref{edw}) or (\ref{KWte}) becomes Kazdan-Warner equation 
%$$\Delta_{d} w +c=\varphi e^w$$%
which has been deeply studied by Kazdan and  Warner in \cite{KW, KW1}  to discuss Gaussian curvature problem on compact Riemannian surface and $c$ is the Euler characteristic of the compact Riemannian surface. Also they had considered the solvability as $n\geq 2$ from the perspective of differential equations when $c<0$. 

If we directly use the results of Kazdan and Warner we can only get the results for the balanced case, in which the gradient term $\left\langle\alpha_F, d w\right\rangle$ vanishes, but if we add a gradient term, we don't have to restrict our perspective on the balanced case.  In this section we give some preliminaries.

The solvability of the linear equation \eqref{leg}  follows the theorem below.
\begin{lemma}\label{sec2}
Let $(M,J,h)$ be a compact almost Hermitian manifold of real dimension $2 n$. If $f \in C^{\infty}(M)$ satisfies $\int_{M}f dV = 0$, 
 $$\Delta_d g+\langle \alpha_F, d g\rangle=f $$
 has a unique smooth solution up to adding a constant.
\end{lemma}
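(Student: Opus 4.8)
The plan is to solve the linear operator equation $L g := \Delta_d g + \langle \alpha_F, dg\rangle = f$ by a standard functional-analytic argument, treating $L$ as a zeroth-order perturbation of the Hodge Laplacian $\Delta_d$ on the compact manifold $M$. First I would observe that $L$ is a second-order linear elliptic operator (its principal symbol is that of $\Delta_d$, hence elliptic), so by elliptic theory its kernel and cokernel are finite-dimensional and it has Fredholm index zero; smoothness of solutions will then follow automatically from elliptic regularity once we produce a weak solution. The crux is therefore to identify the kernel of $L$ and the kernel of its formal adjoint $L^*$, and to match the solvability condition with the hypothesis $\int_M f\, dV = 0$.

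The key computation uses that $h$ is the Gauduchon metric of unit volume, i.e.\ $\delta^h \alpha_F = 0$. For the kernel of $L$: if $Lg = 0$, I would multiply by $g$ and integrate, using $\int_M g\,\Delta_d g\, dV = \int_M |dg|^2\, dV$ and the fact that $\int_M g\,\langle \alpha_F, dg\rangle\, dV = \tfrac12 \int_M \langle \alpha_F, d(g^2)\rangle\, dV = \tfrac12 \int_M g^2\, \delta^h \alpha_F\, dV = 0$ by the Gauduchon condition. Hence $\int_M |dg|^2\, dV = 0$, so $g$ is constant; thus $\ker L$ consists precisely of the constants, a one-dimensional space. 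By the index-zero property, $\mathrm{coker}\, L$ is also one-dimensional, i.e.\ $\ker L^*$ is spanned by a single function $\phi$. To pin down $\phi$, note the formal adjoint is $L^* \psi = \Delta_d \psi - \delta^h(\psi\, \alpha_F) = \Delta_d \psi - \langle \alpha_F, d\psi\rangle - \psi\, \delta^h\alpha_F = \Delta_d \psi - \langle \alpha_F, d\psi\rangle$, again using $\delta^h\alpha_F = 0$. The same integration-by-parts argument applied to $L^*$ shows $\ker L^* = \{\text{constants}\}$ as well. Therefore $f \in \mathrm{range}(L)$ if and only if $f \perp \ker L^*$, that is, if and only if $\int_M f\, dV = 0$, which is exactly the hypothesis.

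Having produced a solution $g \in L^2$, elliptic regularity (bootstrapping in Sobolev spaces, since $f \in C^\infty$) gives $g \in C^\infty(M)$. Uniqueness up to an additive constant is immediate from $\ker L = \{\text{constants}\}$: if $g_1, g_2$ both solve the equation then $L(g_1 - g_2) = 0$, so $g_1 - g_2$ is constant.

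The main obstacle — and really the only subtle point — is the verification that the Gauduchon condition $\delta^h \alpha_F = 0$ forces both $\ker L$ and $\ker L^*$ to reduce to the constants; everything else is routine elliptic theory. One should be careful that the cancellation $\int_M g\,\langle\alpha_F, dg\rangle\, dV = 0$ genuinely uses $\delta^h\alpha_F = 0$ and not merely $d$-closedness or co-closedness of $\alpha_F$ in some other sense, and likewise that the formal adjoint computation correctly produces the sign that makes $L^*$ of the same structural form as $L$. An alternative to the Fredholm/adjoint bookkeeping would be a direct variational argument: minimize the functional $E(g) = \tfrac12\int_M |dg|^2\, dV - \int_M f g\, dV$ over the subspace $\{g \in W^{1,2}(M) : \int_M g\, dV = 0\}$ after absorbing the first-order term — but because of the non-self-adjoint drift term $\langle\alpha_F, dg\rangle$ the cleaner route is the Fredholm argument sketched above.
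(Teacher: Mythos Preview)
Your proposal is correct and follows essentially the same route as the paper's proof: both identify the formal adjoint $L^* = \Delta_d - \langle \alpha_F, d\cdot\rangle$ (using the Gauduchon condition $\delta^h\alpha_F=0$), show via integration that $\ker L = \ker L^* = \{\text{constants}\}$, and then invoke the Fredholm alternative together with elliptic regularity. Your write-up is simply more explicit about the integration-by-parts step $\int_M g\langle\alpha_F, dg\rangle\,dV = \tfrac12\int_M g^2\,\delta^h\alpha_F\,dV = 0$, which the paper leaves as ``by integration, it is easy to show.''
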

\begin{proof}
The proof is the same as Theorem 3.1 in \cite{ACS}. Notice that the adjoint of $\Delta _d+ \langle \alpha_F, d\cdot\rangle$  is $\Delta _d- \langle \alpha_F, d\cdot\rangle$.   By integration, it is easy to show  the kernel of the adjoint operator equals constants. Then by standard linear PDF theory (one can see  \cite[Theorem 1.5.3]{Joyce}),  the co-kernel of $\Delta_d g+\langle \alpha_F, d g\rangle: C^{2,\alpha}(M)\to C^{0,\alpha}(M)$ consists of constants.  It implies $\Delta_d g+\langle \alpha_F, d g\rangle=f $ is solvable and the solution is smooth when $\int_{M}f dV = 0$.  Also the kernel of $\Delta _d+ \langle \alpha_F, d\cdot\rangle$ equals constants which indicates the uniqueness of the solution up to adding a constant. 
\end{proof}

Then we give some lemmas for regularity of the special case for elliptic regularity when the kernel of linear second order operator is $0$ and its simple corollary using Sobolev inequalities. Denote $\|\cdot\|_p$ the norm in Sobolve space $L^p(M)$, $\|\cdot\|_\infty$ the uniform norm on $M$ and $\|\cdot\|_{k,p}$ the norm in the Sobolev space $W^{k,p}(M)$ of the function whose derivatives up to order $k$ are all in $L^p(M)$. 

\begin{lemma}\label{lemma1}
For operator $L u \equiv \Delta_{d} u+\left\langle\alpha_{F}, d u\right\rangle-c u$ with $c<0$, there exists $C$ and $\gamma$ such that for any $u \in W^{2,p}(M)$
\begin{equation}\label{1}
\|u\|_{2, p} \leq C\|L u\|_{p}
\end{equation}
and
\begin{equation}\label{2}
\|u\|_{\infty}+\|\nabla u\|_{\infty} \leq \gamma\|L u\|_{p},
\end{equation}
here $p > dim(M)$.
\end{lemma}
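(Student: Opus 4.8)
The plan is to establish the a priori estimate \eqref{1} first, and then derive \eqref{2} from it via Sobolev embedding. The key point enabling \eqref{1} is that the lower-order term $-cu$ with $c<0$ makes the operator $L$ \emph{invertible} (not merely Fredholm): its kernel is trivial. To see this, suppose $Lu=0$; integrating against $u$ and noting that $\int_M \langle\alpha_F,du\rangle u\,dV = \tfrac12\int_M \langle\alpha_F, d(u^2)\rangle\,dV = -\tfrac12\int_M (\delta^h\alpha_F) u^2\,dV = 0$ since $h$ is Gauduchon, we get $\int_M |\nabla u|^2\,dV - c\int_M u^2\,dV = 0$; as $c<0$ both terms are nonnegative, forcing $u\equiv 0$. (Alternatively one invokes the maximum principle: at an interior maximum of $u$ one has $\Delta_d u \le 0$ and $\langle\alpha_F,du\rangle=0$, so $-cu\le 0$, i.e. $u\le 0$; similarly $u\ge 0$.) Thus $L: W^{2,p}(M)\to L^p(M)$ is a bijective bounded linear map between Banach spaces — it is Fredholm of index zero by the standard $L^p$ elliptic theory for this uniformly elliptic second-order operator with smooth coefficients (as in \cite[Theorem 1.5.3]{Joyce}), and injectivity plus index zero gives surjectivity. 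By the open mapping theorem its inverse is bounded, which is exactly \eqref{1} with $C = \|L^{-1}\|$.

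For \eqref{2}, take any $u\in W^{2,p}(M)$ with $p>\dim(M)=2n$. By the Sobolev embedding theorem, $W^{2,p}(M)\hookrightarrow C^{1,\beta}(M)$ for $\beta = 1-\dim(M)/p>0$ (here one needs $p>\dim M$ precisely so that first derivatives land in a Hölder space, in particular are bounded), hence there is a constant $C'$ with $\|u\|_\infty + \|\nabla u\|_\infty \le C'\|u\|_{2,p}$. Combining this with \eqref{1} gives $\|u\|_\infty + \|\nabla u\|_\infty \le C'C\,\|Lu\|_p =: \gamma\|Lu\|_p$, as desired.

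The only real content is the injectivity of $L$ together with the citation of the right form of the $L^p$-elliptic package; everything else is bookkeeping. The main (minor) obstacle to watch is making sure the elliptic estimate is quoted in the form ``$\|u\|_{2,p}\le C(\|Lu\|_p + \|u\|_p)$'' and then absorbing the $\|u\|_p$ term using triviality of the kernel via a standard contradiction/compactness argument (if no uniform $C$ existed, take $u_k$ with $\|u_k\|_{2,p}=1$ and $\|Lu_k\|_p\to 0$; by Rellich a subsequence converges in $W^{1,p}$ and, using the elliptic estimate again, in $W^{2,p}$ to some $u_\infty$ with $\|u_\infty\|_{2,p}=1$ and $Lu_\infty=0$, contradicting $\ker L = 0$). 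This is routine, so I would state it briefly rather than belabor it. One should also note that the coefficients $\alpha_F$ and the metric are smooth on the compact manifold $M$, so no regularity subtleties arise.
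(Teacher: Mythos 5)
Your proposal is correct and follows essentially the same route as the paper: the interior $L^p$ estimate $\|u\|_{2,p}\le C(\|Lu\|_p+\|u\|_p)$, triviality of $\ker L$, a compactness/contradiction argument (or equivalently your Fredholm-index-zero plus open-mapping shortcut) to absorb the $\|u\|_p$ term, and the Sobolev embedding $W^{2,p}\hookrightarrow C^{1,\beta}$ for $p>\dim M$ to deduce \eqref{2} from \eqref{1}. One small caveat: in your parenthetical maximum-principle sketch, with the paper's convention $\Delta_d=d\delta^h+\delta^h d$ one has $\Delta_d u\ge 0$ at an interior maximum (which is what actually yields $-cu\le 0$ there), though your primary integration-by-parts proof of injectivity, which uses that $h$ is Gauduchon, is fine as written.
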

\begin{proof}
In the proof $C$ denotes a constant that may change from line to line, but is always independent of $u$. Since $L$ is a linear second order elliptic operator with smooth coefficient, we have the following $L^{p}$-regularity that for any $p>1$ there is a constant $C>0$ such that for all $u \in W^{2,p}(M)$
\begin{equation}\label{3}
\|u\|_{2, p} \leq C\left(\|L u\|_{p}+\|u\|_{ p}\right).
\end{equation}
Using maximum principle one can easily obtain that $\operatorname{ker}L=0$, we then need an inequality that if $\operatorname{ker}L=0$, then for all $u \in W^{2,p}(M)$ 
\begin{equation}\label{4}
\|u\|_{p} \leq C\|L u\|_{p}.
\end{equation}
This can be simply proved by contradiction. Assume that for any $n>0$ there exists $u_{n}$ such that
$$
\left\|u_{n}\right\|_{p}=1 \text { and }\left\|L u_{n}\right\|_{p} \leq 1 / n ,
$$
thus, $\left\|L u_{n}\right\|_{p} \rightarrow 0$, in particular, $\left\|u_{n}\right\|_{2,p}$ is bounded since $\left\|u_{n}\right\|_{p}+\left\|L u_{n}\right\|_{p}$ is bounded.   From the compactness of the imbedding of $W^{2,p}(M)$ to  $L^{p}(M)$ and the weak compactness of bounded sets  in $W^{2,p}(M)$, there is a subsequence  relabel  as $v_{n}$ which is weak convergent in $W^{2,p}(M)$ to some $u$ satisfying $\|u\|_p=1$. Obviously $\int_M g L v_n dV\to \int_M g L u dV$ for all $g\in L^{\frac{p}{p-1}}(M)$, we must have $\int_M g Lu dV=0$  for all  $g\in L^{\frac{p}{p-1}}(M)$. Hence $Lu=0$ and $u=0$ by the $\ker L=0$, which contradicts the condition $\|u\|_p=1$.

Combining (\ref{3}) and (\ref{4}) one can easily obtain (\ref{1}) and (\ref{2}) is the consequence of (\ref{1}) and Sobolev inequalities.
\end{proof}

We will also give an asymptotic theorem to describe the limiting behavior of $cu(x;c)$ as $c$ goes to negative infinity.  Here $u(x;c)$ is the solution of 
\begin{equation}\label{asymEqua}
\Delta_{d} u+\left\langle\alpha_{F}, d u\right\rangle-c u+f=0  
\end{equation}
 with $f \in  C^{\infty}(M)$ and $c<0$ on a compact manifold $M$. 

Assume $B_{1}$ and $B_{2}$ are Banach spaces and $B_{2}$ a subspace of $B_{1}$ such that the natural injection $B_{2} \rightarrow B_{1}$ is continuous, $\|\,\cdot\, \|_{1}$ is the norm in $B_{1}$, and $G: B_{2} \rightarrow B_{1}$ is a continuous linear map. First we need the asymptotic lemma in \cite{KW}.  

\begin{lemma}\label{asymptoticlemma}
(\cite{KW} Lemma 4.1)Assume that $(G+\alpha I): B_{2} \rightarrow B_{1}$ is invertible for all $\alpha \leq 0$ and that
\begin{equation}\label{asym}
\left\|(G+\alpha I)^{-1}\right\| \equiv \sup _{Z \in B_{1}} \frac{\left\|(G+\alpha I)^{-1} Z\right\|_{1}}{\|Z\|_{1}} \leq m(\alpha),
\end{equation}
where $m(\alpha) \rightarrow 0$ as $\alpha \rightarrow-\infty$. If $Y \in B_{2}$ (not just $B_{1}$), let $X_{\alpha}$ be the unique solution of $G X_{\alpha}+\alpha X_{\alpha}=Y$. Then
$$
\lim _{\alpha \rightarrow-\infty}\left\|\alpha X_{\alpha}-Y\right\|_{1}=0,
$$
that is, $\alpha X_{\alpha} \rightarrow Y$.
\end{lemma}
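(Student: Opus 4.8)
\textbf{Proof proposal for Lemma \ref{asymptoticlemma}.}

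The plan is to reduce everything to a single estimate: bound the norm of $\alpha X_\alpha - Y$ in $B_1$ by rewriting it in a form that exhibits an explicit factor tending to $0$. First I would observe that, since $Y\in B_2$ and $(G+\alpha I)$ is invertible on $B_2$, the element $GY\in B_1$ is well-defined, and I can apply $(G+\alpha I)^{-1}$ to it. The key algebraic identity is
\begin{equation*}
\alpha X_\alpha - Y = -(G+\alpha I)^{-1}(GY),
\end{equation*}
which one checks by applying $(G+\alpha I)$ to both sides: on the left, $(G+\alpha I)(\alpha X_\alpha - Y) = \alpha(GX_\alpha + \alpha X_\alpha) - (GY + \alpha Y) = \alpha Y - GY - \alpha Y = -GY$, using $GX_\alpha + \alpha X_\alpha = Y$; on the right it is manifestly $-GY$. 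Invertibility then gives the displayed identity.

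With that identity in hand, the estimate is immediate from the hypothesis \eqref{asym}:
\begin{equation*}
\|\alpha X_\alpha - Y\|_1 = \bigl\|(G+\alpha I)^{-1}(GY)\bigr\|_1 \le m(\alpha)\,\|GY\|_1,
\end{equation*}
and since $Y\in B_2$ is fixed, $\|GY\|_1$ is a finite constant independent of $\alpha$ (here I use that $G:B_2\to B_1$ is a continuous linear map, so $GY$ genuinely lies in $B_1$ with finite norm). Letting $\alpha\to-\infty$ and invoking $m(\alpha)\to 0$ forces $\|\alpha X_\alpha - Y\|_1\to 0$, which is exactly the claim $\alpha X_\alpha \to Y$ in $B_1$.

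The only place requiring any care — and the reason the hypothesis $Y\in B_2$ (rather than merely $Y\in B_1$) is essential — is ensuring that $GY$ makes sense as an element of $B_1$ with a norm that does not depend on $\alpha$; if $Y$ were only in $B_1$ we could not apply $G$ to it and the whole argument collapses. So the "main obstacle" is really just bookkeeping about which space each object lives in, together with verifying the algebraic identity by a direct application of $G+\alpha I$; there is no analytic difficulty once the identity is written down. I would present the identity, the one-line estimate, and the limit, in that order.
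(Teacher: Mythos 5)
Your argument is correct and is precisely the standard proof of this lemma (the paper itself gives no proof, simply citing Kazdan--Warner, Lemma 4.1): the identity $\alpha X_\alpha - Y = -(G+\alpha I)^{-1}(GY)$ is verified exactly as you do, using that $\alpha X_\alpha - Y \in B_2$ so that $G+\alpha I$ may be applied to it, and the bound $\|\alpha X_\alpha - Y\|_1 \le m(\alpha)\|GY\|_1 \to 0$ follows from \eqref{asym}. Your remark that $Y\in B_2$ is exactly what makes $GY$ a fixed element of $B_1$ is the right observation; nothing is missing.
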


Then we give the asymptotic theorem which is a generalization of Theorem 4.4 in \cite{KW} by adding a gradient term.

\begin{theorem}\label{asymptotictheorem}
For $c<0$, $u(x ; c)$ is the unique solution of (\ref{asymEqua}) on a compact manifold $M$. Then
$$
\lim _{c \rightarrow-\infty} c u(x;c)=f(x),
$$
where the convergence is uniform on $M$.
\end{theorem}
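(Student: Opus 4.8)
The plan is to follow the proof of \cite{KW} Theorem 4.4, realizing $u(\cdot\,;c)$ as the solution of an abstract resolvent equation and applying Lemma \ref{asymptoticlemma}. Write $\mathcal{A}w:=\Delta_d w+\langle\alpha_F,dw\rangle$, so that (\ref{asymEqua}) reads $\mathcal{A}u-cu=-f$. Integrating over $M$, and using $\int_M\Delta_d u\,dV=0$ together with $\int_M\langle\alpha_F,du\rangle\,dV=0$ (the latter because $h$ is the Gauduchon metric), gives $\int_M u\,dV=\frac{1}{c}\int_M f\,dV$. Splitting $f=\bar f+f_0$ with $\bar f$ the mean of $f$ and $\int_M f_0\,dV=0$, one checks that $u(\cdot\,;c)=\bar f/c+u_0(\cdot\,;c)$, where $u_0$ solves the same equation with source $f_0$ and has zero mean; hence $cu(\cdot\,;c)=\bar f+cu_0(\cdot\,;c)$, and it suffices to show $cu_0(\cdot\,;c)\to f_0$ uniformly.

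I would apply Lemma \ref{asymptoticlemma} with $B_1$, $B_2$ the zero-mean subspaces of $L^p(M)$ and $W^{2,p}(M)$ respectively ($p>2n$), $G=-\mathcal{A}$, $Y=f_0$, and $\alpha=c$, so that $X_\alpha=u_0(\cdot\,;c)$. The operator $-\mathcal{A}$ preserves the zero-mean condition, and $G+\alpha I$ is invertible on these spaces: for $\alpha<0$ it is the operator $-L$ of Lemma \ref{lemma1}, hence bijective; for $\alpha=0$ it is $-\mathcal{A}$, bijective on zero-mean functions by Lemma \ref{sec2}. The decisive hypothesis is the resolvent decay $\|(G+\alpha I)^{-1}\|_{L^p\to L^p}\le m(\alpha)\to0$. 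To establish it, if $(G+\alpha I)v=Z$ I multiply by $|v|^{p-2}v$ and integrate:
\begin{equation*}
-(p-1)\int_M|v|^{p-2}|dv|^2\,dV-\frac{1}{p}\int_M\langle\alpha_F,d(|v|^p)\rangle\,dV+\alpha\int_M|v|^p\,dV=\int_M Z\,|v|^{p-2}v\,dV .
\end{equation*}
The middle term vanishes since $\delta^h\alpha_F=0$, and using the nonpositivity of the first term together with H\"older's inequality on the right-hand side gives $\|v\|_p\le\|Z\|_p/|\alpha|$, so $m(\alpha)=1/|\alpha|$. Lemma \ref{asymptoticlemma} now yields $cu_0(\cdot\,;c)\to f_0$ in $L^p(M)$.

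It remains to upgrade $L^p$-convergence to uniform convergence. Set $w_c:=cu_0(\cdot\,;c)$. From $(G+cI)u_0=f_0$ one gets $(G+cI)(u_0-f_0/c)=\frac{1}{c}\mathcal{A}f_0$, whence $\|w_c-f_0\|_p=|c|\,\|u_0-f_0/c\|_p\le\|\mathcal{A}f_0\|_p/|c|$ by the resolvent bound; consequently $\mathcal{A}w_c=c(w_c-f_0)$ is bounded in $L^p(M)$. Elliptic $L^p$-regularity then bounds $w_c$ in $W^{2,p}(M)$, and as $p>2n$ the embedding $W^{2,p}(M)\hookrightarrow C^{1}(M)$ is compact, so $\{w_c\}$ is precompact in $C^0(M)$; since its only $C^0$-limit point is the $L^p$-limit $f_0$, we get $w_c\to f_0$ in $C^0(M)$, and adding the constant part gives $cu(\cdot\,;c)\to f$ uniformly.

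The main obstacle is the uniform resolvent decay: without the first-order term it is classical, and the point of the energy identity above is that the Lee-form contribution is neutralized by the Gauduchon condition $\delta^h\alpha_F=0$, exactly the mechanism behind Lemma \ref{sec2}. I remark that Lemma \ref{asymptoticlemma} can in fact be bypassed: $\phi_c:=cu(\cdot\,;c)-f$ solves $\mathcal{A}\phi_c-c\phi_c=-\mathcal{A}f$, and since $\langle\alpha_F,d\phi_c\rangle$ vanishes at any extremum of $\phi_c$ while $\Delta_d\phi_c\ge0$ at an interior maximum (and $\le 0$ at a minimum), the maximum principle gives directly $\|\phi_c\|_\infty\le\|\mathcal{A}f\|_\infty/|c|\to0$, which is the assertion.
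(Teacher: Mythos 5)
Your proof is correct, and it reaches the conclusion by a genuinely different route from the paper's. Writing $\mathcal{A}:=\Delta_d+\langle\alpha_F,d\,\cdot\,\rangle$, the paper applies Lemma \ref{asymptoticlemma} in the Hilbert scale $B_1=W^{s,2}(M)$, $B_2=W^{s+2,2}(M)$ with $G=-(\mathcal{A}+I)$ and $\alpha=c+1$: the shift by $-I$ sidesteps the non-invertibility of $\mathcal{A}$ at $\alpha=0$, the resolvent bound $m(\alpha)=1/|\alpha|$ is obtained from the $W^{s,2}$ inner product $\langle G^k\cdot,G^k\cdot\rangle_{L^2}$ (where, exactly as in your energy identity, the Gauduchon condition $\delta^h\alpha_F=0$ kills the first-order cross term), and uniform convergence comes in one stroke by taking $s$ large and invoking Sobolev embedding. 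You instead split off the mean of $f$, work on the zero-mean subspaces of $L^p$ and $W^{2,p}$ (which restores invertibility at $\alpha=0$ via Lemma \ref{sec2}), prove the resolvent decay by testing against $|v|^{p-2}v$, and then upgrade $L^p$-convergence to uniform convergence through the identity $(G+cI)(u_0-f_0/c)=\mathcal{A}f_0/c$, elliptic $L^p$-regularity, and compactness of $W^{2,p}(M)\hookrightarrow C^1(M)$; all of these steps check out, and as a bonus you get the explicit rate $\|cu_0-f_0\|_p\le\|\mathcal{A}f_0\|_p/|c|$. Your closing remark deserves emphasis: the maximum-principle argument applied to $\phi_c=cu(\,\cdot\,;c)-f$, which solves $\mathcal{A}\phi_c-c\phi_c=-\mathcal{A}f$, yields $\|\phi_c\|_\infty\le\|\mathcal{A}f\|_\infty/|c|$ in a few lines, requires neither the abstract lemma nor the Gauduchon hypothesis (the Lee-form term vanishes pointwise at an extremum), and produces an explicit uniform rate of convergence; it supersedes both longer arguments.
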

\begin{proof}
We apply Lemma \ref{asymptoticlemma} with $B_{1}=W^{s, 2}(M)$ and $B_{2}=$ $W^{s+2,2}(M)$, for $s$ a positive even integer, $s=2 k$. Let $G=-(\Delta_{d}+\left\langle\alpha_{F}, d \, \cdot \,\right\rangle+I)$. Then $G+$ $\alpha I: W^{s+2,2} \rightarrow W^{s, 2}$ is continuous and has a continuous inverse for any $\alpha \leq 0$ by standard elliptic PDE theory. If we let $Z_2=(G+\alpha I)^{-1} Z_1$ for some $Z_1 \in W^{s, 2}$, then (\ref{asym}) will be established if we can show that there is a function $m(\alpha) \rightarrow 0$ as $\alpha \rightarrow-\infty$ such that
$$
\|Z_2\|_{s, 2} \leq m(\alpha)\|(G+\alpha I) Z_2\|_{s, 2}
$$
for all $Z_2 \in W^{s+2,2}$. Now the fundamental inequality for elliptic operators shows that the $W^{s, 2}$ norm of $\varphi$ is equivalent to the $L^{2}$ norm of $G^{k} \psi=$ $(-1)^{k}(\Delta_{d}+\langle\alpha_{F}, d  \, \cdot \,\rangle+I)^{k} \psi$ (recall that $\left.s=2 k\right)$. Thus we can consider the $W^{s, 2}$ inner product and norm to be defined by
$$
\langle Z_1, Z_2\rangle_{s, 2}=\langle G^{k} Z_1, G^{k} Z_2\rangle
$$
where $\langle \, \cdot \,, \, \cdot \,\rangle$ is the $L^{2}(M)$ inner product. Consequently, if we denote $G^{k} Z_2 = \hat{Z_2}$, we have
$$
-\langle G Z_2, Z_2\rangle_{s, 2}=-\langle G \hat{Z_2}, \hat{Z_2}\rangle
=-\langle -\Delta_{d}\hat{Z_2}-\langle\alpha_{F}, d \hat{Z_2}\rangle-\hat{Z_2} , \hat{Z_2}\rangle \ge 0
,$$
therefore for $\alpha<0$ we have
$$-\alpha\left\| Z_2\right\|_{s,2}^{2} \leq-\langle G Z_2+\alpha  Z_2,  Z_2\rangle_{s,2} \leq\left\|(G+\alpha I){ Z_2}\right\|_{s, 2}\| Z_2\|_{s, 2}.$$
This verifies (\ref{asym}) with $m(\alpha)=1 /|\alpha|$, that is,
$$
\| Z_2\|_{s, 2} \leq \frac{1}{|\alpha|}\|(G+\alpha I)  Z_2\|_{s,2} . 
$$
If we let $\alpha=c+1$, then $f=-\Delta_{d} u-\langle\alpha_{F}, du\rangle+c u=G u+\alpha u$. Thus by Lemma \ref{asymptoticlemma},
$$
\lim _{\alpha \rightarrow-\infty}\|\alpha u-f\|_{s, 2}=0
$$
for any positive integer s. By the Sobolev embedding theorem, if $s$ is sufficiently large, we conclude the convergence is uniform on $M$,
$$
\lim _{c \rightarrow-\infty}(c+1) u(x ; c)=f(x) .
$$
This completes the proof.
\end{proof}

\section{Kazdan-Warner type equation with $c<0$}\label{sec4}

We use the super and sub-solution method to deal with Kazdan-Warner type equation
$$
\Delta_{d} w+\left\langle\alpha_{F}, d w\right\rangle+c=\varphi e^{w}.
$$

Let's define the super and sub-solutions. We call $w_{-}, w_{+} $ a sub (respectively, super) solution of (\ref{KWte}) on $M$ if
$$
\Delta_{d} w_{-}+\left\langle\alpha_{F}, d w_{-}\right\rangle+c-\varphi e^{w_{-}}\leq 0,$$
$$
\Delta_{d} w_{+}+\left\langle\alpha_{F}, d w_{+}\right\rangle+c-\varphi e^{w_{+}}\geq 0.
$$

\begin{theorem}\label{thmUPPSUB}

If we successfully construct the super and sub-solutions $w_{+}$ and $w_{-}$, then (\ref{KWte}) has a smooth solution. 
\end{theorem}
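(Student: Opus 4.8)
The plan is to use the classical monotone iteration (super/sub-solution) scheme adapted to the operator $L_k u := \Delta_d u + \langle \alpha_F, du\rangle - k u$ with a large constant $k>0$. First I would fix a constant $k$ large enough that the map $w \mapsto kw + c - \varphi e^w$ is monotone non-increasing in $w$ on the order interval $[w_-, w_+]$; since $w_-, w_+$ are continuous on the compact $M$, it suffices to take $k \geq \sup_M |\varphi| e^{\max_M w_+}$, so that $\frac{\partial}{\partial w}(-kw + \varphi e^w) = -k + \varphi e^w \leq 0$ throughout the interval. With this $k$ the operator $L_k$ has trivial kernel (maximum principle, exactly as in Lemma \ref{lemma1} with $c = -k < 0$), hence by the $L^p$-theory quoted there $L_k \colon W^{2,p}(M) \to L^p(M)$ is an isomorphism for $p > \dim M$.

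Next I would set up the iteration. Starting from $w_0 := w_+$, define $w_{m+1}$ to be the unique solution of
\begin{equation*}
L_k w_{m+1} = -k w_m + c - \varphi e^{w_m}.
\end{equation*}
The right-hand side is continuous (indeed Hölder, after one step, by elliptic regularity bootstrapping), so each $w_{m+1} \in C^\infty(M)$. The key claim is the sandwiching and monotonicity: $w_- \leq \cdots \leq w_{m+1} \leq w_m \leq \cdots \leq w_+$. This follows from the maximum principle applied to $L_k$ together with the defining inequalities for super/sub-solutions and the monotonicity of $w \mapsto -kw + c - \varphi e^w$: for instance $L_k(w_+ - w_1) = L_k w_+ - (-k w_+ + c - \varphi e^{w_+}) = \Delta_d w_+ + \langle\alpha_F, dw_+\rangle + c - \varphi e^{w_+} \geq 0$, so $w_+ - w_1 \geq 0$ by the minimum principle for $L_k$ (note $\ker L_k = 0$ and the zeroth-order coefficient has a favorable sign); the comparison $w_1 \geq w_-$ and the inductive step $w_{m-1} \geq w_m \Rightarrow w_m \geq w_{m+1}$ and $w_{m+1} \geq w_-$ are analogous, using monotonicity of the nonlinearity exactly where the order interval is needed.

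Then I would pass to the limit. The sequence $\{w_m\}$ is monotone and uniformly bounded (between the continuous functions $w_-$ and $w_+$), hence converges pointwise to some $w$ with $w_- \leq w \leq w_+$. The right-hand sides $-k w_m + c - \varphi e^{w_m}$ are then uniformly bounded in $L^\infty(M)$, so by estimate \eqref{1} of Lemma \ref{lemma1} the $w_{m+1}$ are bounded in $W^{2,p}(M)$ for every finite $p$; by compact Sobolev embedding a subsequence converges in $C^{1,\beta}(M)$, and the limit must be $w$, which therefore solves $L_k w = -k w + c - \varphi e^w$, i.e. $\Delta_d w + \langle\alpha_F, dw\rangle + c = \varphi e^w$. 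A final elliptic bootstrap (the right-hand side is now as smooth as $w$, hence iterating Schauder estimates) gives $w \in C^\infty(M)$.

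The main obstacle — really the only delicate point — is establishing the comparison/sandwiching step rigorously in the present non-self-adjoint setting: the operator carries the first-order term $\langle \alpha_F, d\cdot\rangle$, so one must invoke the maximum principle for general second-order elliptic operators with a zeroth-order coefficient of the correct sign (here $-k < 0$) rather than a purely Laplacian argument, and one must be careful that the monotonicity of the nonlinearity is only available on the order interval $[w_-, w_+]$, so the induction must simultaneously carry the two-sided bound and the monotone decrease. Everything else — existence and uniqueness at each step, uniform estimates, passage to the limit, smoothness — is routine given Lemma \ref{sec2} and Lemma \ref{lemma1}.
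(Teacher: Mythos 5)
Your proposal is the same monotone-iteration scheme the paper uses (large zeroth-order shift, maximum principle to sandwich the iterates between $w_-$ and $w_+$, $L^p$ estimates from Lemma \ref{lemma1} plus compactness to pass to the limit, bootstrap for smoothness); the only structural difference is that you iterate downward from $w_+$ while the paper iterates upward from $w_-$, which is immaterial. However, your signs are internally inconsistent and need fixing. First, a fixed point of the displayed iteration $L_k w_{m+1}=-kw_m+c-\varphi e^{w_m}$ satisfies $\Delta_d w+\langle\alpha_F,dw\rangle=c-\varphi e^{w}$, not \eqref{KWte}; the right-hand side should be $-kw_m-c+\varphi e^{w_m}$, which is what your subsequent computation of $L_k(w_+-w_1)$ implicitly assumes. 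Second, since the paper's $\Delta_d=d\delta^h+\delta^h d$ is the nonnegative Hodge Laplacian, the operator that is invertible and positivity-preserving is $\Delta_d+\langle\alpha_F,d\,\cdot\,\rangle+k$ with $k>0$ (this is exactly Lemma \ref{lemma1} with $c=-k<0$, i.e.\ the paper's $K$ with $\lambda=k$), whereas your $L_k=\Delta_d+\langle\alpha_F,d\,\cdot\,\rangle-k$ may fail to be injective and does not give the comparison $L_kv\ge 0\Rightarrow v\ge 0$; correspondingly, the monotonicity you need from the nonlinearity is that $w\mapsto kw+\varphi e^{w}$ be nondecreasing on $[s,S]$, i.e.\ $k\ge\sup_M|\varphi|\,e^{S}$, matching the paper's choice of $\lambda$. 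With these sign corrections the argument is the paper's proof.
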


\begin{proof}
The proof is a standard process to prove the existence of the solution when super and sub-solutions are constructed, which is similar to Theorem 2.3 in \cite{Fus}, here we rewrite it briefly. In the proof $C$ denotes a constant that may change from line to line, but is always independent of $w_{-}$, $w_{+}$ or $w_i$.($i=0,1,...$)

If we can construct the super and sub-solutions, we set
$$
s=\inf _{M} w_{-}, \quad S=\sup _{M} w_{+} ,
$$
and take $\lambda>0$ large so that
$$
\lambda>-\varphi e^{w} \quad \text { for any }(x, w) \in M \times[s, S].
$$

Now we write $w_{0}=w_{-} .$ For any $w_{i}, i=0,1, \ldots$, we suppose $w_{i+1} \in C^{\infty}(M)$ solve
\begin{equation}\label{equa9}
Kw_{i+1} := \Delta_{d} w_{i+1}+\left\langle\alpha_{F}, d w_{i+1}\right\rangle+\lambda w_{i+1}=\varphi e^{w_{i}}-c+\lambda w_{i} .
\end{equation}
Since $\lambda > 0$, the existence of smooth solution of this equation is guaranteed by standard elliptic PDE theory. We first prove
$$
w_{-} \leq w_{i} \leq w_{+},
$$
this is obviously true for $i=0$. Suppose it holds for some $i \geq 0$, consider $w_{i+1}$. First, we note
$$
\Delta_{d}\left(w_{i+1}-w_{-}\right)+\left\langle\alpha_{F}, d (w_{i+1}-w_{-})\right\rangle+\lambda\left(w_{i+1}-w_{-}\right) \geq \varphi e^{w_{i}}-\varphi e^{w_{-}}-\lambda\left(w_{-}-w_{i}\right),
$$
by the mean value theorem, $\varphi e^{w_{i}}-\varphi e^{w_{-}}-\lambda\left(w_{-}-w_{i}\right)\ge 0$, then by the maximum principle, $ w_{-} \le w_{i+1}$, similarly, we can prove $ w_{i+1} \leq w_{+}$, so $ w_{-} \le w_{i} \leq w_{+}$ for all $i$ by induction. In particular,
$$
s \leq w_{i}(x) \leq S \quad \text { for any }  i=0,1, \ldots
$$
Similarly, we can prove the monotonicity
\begin{equation}\label{mono}
w_{-} \leq w_{1} \leq w_{2} \leq \cdots \leq w_{+} .
\end{equation}

On the other hand, from Lemma \ref{lemma1} we have 
$$
\|w_{i}\|_{2, p}\le C, \quad \forall w_{i} \in W^{2, p}(M).
$$
By (\ref{2}), both $w_{i}$ and its first derivative are uniformly bounded, so Ascoli-Arzel\`a theorem implies that $$
w_i \rightarrow w \text { in } C^0(M).
$$ In view of the monotonicity (\ref{mono}), we conclude that the entire sequence $w_i$ itself converges uniformly to $w$. Inequality (\ref{3}) implies that
$$
\begin{aligned}
\left\|w_{i+1}-w_{j+1}\right\|_{2, p} & \leq C\left\|K\left(w_{i+1}-w_{j+1}\right)\right\|_p \\
& \leq C\left(\|\varphi\|_p\left\|e^{w_i}-e^{w_j}\right\|_{\infty}+\|\lambda\|_p\left\|w_i-w_j\right\|_{\infty}\right).
\end{aligned}
$$
Therefore $$
w_i \rightarrow w \text { in } W^{2, p}(M).
$$Since $K: W^{2, p} \rightarrow L^{p}$ is continuous, it follows that $w$ is a solution of (\ref{KWte}), using the Calder\`on-Zygmund inequality, Sobolev embeddings and bootstrap argument, we obtain that $w \in C^{\infty}(M)$.
\end{proof}

Then we give conditions for the existence of the solution of (\ref{KWte}), this is a generalization of Theorem 10.1 in \cite{KW}.

\begin{theorem}\label{thm1}
Let $(M,J,h)$ be a compact almost Hermitian manifold of real dimension $2n$, $\alpha_{F}$ is the Lee form. Consider $c<0$ and $ \varphi\in C^{\infty}(M)$. We denote $\frac{1}{\operatorname{Vol}(M)}\int_{M}\varphi dV$ as $\bar{\varphi}$.

(a) If a solution of (\ref{KWte}) exists, then $\bar{\varphi}<0$. And the unique solution $\phi$ of
\begin{equation}\label{necessaryEQUA}
\Delta_{d} \phi+\left\langle\alpha_{F}, d \phi\right\rangle-c \phi+\varphi=0
\end{equation}
must be positive.

(b) If $\bar{\varphi}<0$, then there is a constant $-\infty \leq c_{-}<0$ depending on $\varphi$ (which we write $\left.c_{-}(\varphi)\right)$ such that one can solve (\ref{KWte}) for all $c_{-}(\varphi)<c<0$ and can't solve for all $c<c_{-}(\varphi)$.
\end{theorem}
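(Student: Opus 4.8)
\textbf{Proof plan for Theorem \ref{thm1}.}

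For part (a), the plan is to integrate equation (\ref{KWte}) over $M$. Since $h$ is the Gauduchon metric with unit volume, $\int_M \langle \alpha_F, dw\rangle\, dV = \int_M (\delta^h\alpha_F)\, w\, dV = 0$, and $\int_M \Delta_d w\, dV = 0$. Hence $c\,\mathrm{Vol}(M) = \int_M \varphi e^w\, dV$. Since $c<0$, the right-hand side is negative, so $\varphi$ must be negative somewhere; in fact, writing $e^w>0$ we only directly get $\int_M \varphi e^w < 0$, but to conclude $\bar\varphi<0$ one argues as follows: if a solution $w$ exists, set $\phi$ to be the unique solution of (\ref{necessaryEQUA}) guaranteed by Lemma \ref{lemma1} (since $\ker L = 0$ when $c<0$). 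Subtracting, $L(\phi) = -\varphi$ and $L$ applied to a suitable comparison function involving $e^w$ shows $\phi>0$ via the maximum principle: from (\ref{KWte}), $L(e^w)$ can be estimated using $\Delta_d e^w = e^w\Delta_d w - e^w|dw|^2$ and $\langle\alpha_F, d e^w\rangle = e^w\langle\alpha_F,dw\rangle$, giving $L(e^w) = e^w(\Delta_d w + \langle\alpha_F,dw\rangle - |dw|^2 - c\,e^w)\leq e^w(\varphi e^w - c - c e^w) $ — one then compares $\phi$ with a multiple of $e^w$ via the maximum principle for $L$ to force $\phi>0$; positivity of $\phi$ together with $L\phi = -\varphi$ and integrating against the positive first eigenfunction-type argument (or simply integrating $L\phi = -\varphi$, which gives $-c\int_M\phi\, dV = \int_M\varphi\, dV$, and $\phi>0$, $c<0$ force $\bar\varphi<0$) yields the conclusion.

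For part (b), the plan is to use Theorem \ref{thmUPPSUB}: it suffices, for a given $c<0$ with $\bar\varphi<0$, to construct a sub-solution $w_-$ and a super-solution $w_+$ with $w_-\le w_+$. The super-solution is the easy one: since $\bar\varphi<0$, solve $\Delta_d \zeta + \langle\alpha_F,d\zeta\rangle = \bar\varphi - \varphi$ (solvable by Lemma \ref{sec2} since the right side integrates to zero), and check that $w_+ = \zeta + A$ is a super-solution for $A$ a sufficiently large negative constant, using $\varphi e^{w_+}\to 0$ as $A\to-\infty$ and $c<0$ — more precisely $\Delta_d w_+ + \langle\alpha_F,dw_+\rangle + c - \varphi e^{w_+} = \bar\varphi - \varphi + c - \varphi e^{A}e^\zeta$, which can be made $\geq 0$ everywhere once we also absorb the constant; this requires a slightly more careful choice, e.g. taking $w_+$ to solve an auxiliary linear equation so that $\Delta_d w_+ + \langle \alpha_F, dw_+\rangle + c$ is a suitable negative constant and $\varphi e^{w_+}$ is smaller in absolute value. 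The sub-solution is where monotonicity in $c$ enters: one shows that the set of $c<0$ for which a sub-solution below the given super-solution exists is an interval $(c_-(\varphi), 0)$.

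The main obstacle, and the heart of part (b), is establishing the threshold structure: I would prove that solvability is monotone in $c$, i.e. if (\ref{KWte}) is solvable for some $c_0<0$ then it is solvable for all $c\in(c_0,0)$. The idea is that a solution $w_0$ at $c_0$ serves as a sub-solution at any $c > c_0$ (since decreasing $-c$... one checks $\Delta_d w_0 + \langle\alpha_F,dw_0\rangle + c - \varphi e^{w_0} = (c - c_0) \geq 0$ when $c\geq c_0$ — wait, this needs $\Delta_d w_0 + \langle\alpha_F,dw_0\rangle + c_0 - \varphi e^{w_0}=0$, so the expression equals $c - c_0 \ge 0$, hence $w_0$ is a sub-solution at $c$); then one must produce a super-solution at $c$ lying above $w_0$, which is where the argument of the previous paragraph (large negative shift of a linear solution) must be executed with care to guarantee $w_+\ge w_0$. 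Defining $c_-(\varphi) = \inf\{c<0 : (\ref{KWte})\text{ is solvable}\}$ then gives the stated dichotomy, with $c_-(\varphi) = -\infty$ allowed. I expect the delicate point to be the simultaneous control of the super-solution (ensuring it dominates the sub-solution uniformly) rather than the sub-solution itself.
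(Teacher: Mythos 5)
Your overall architecture matches the paper's: part (a) is proved by showing the solution $\phi$ of \eqref{necessaryEQUA} dominates a positive function and then integrating, and part (b) by super/sub-solutions together with monotonicity in $c$. However, two key steps are not actually carried out, and as written they would fail. In (a), the correct comparison function is $v=e^{-w}$, not $e^{w}$: a direct computation turns \eqref{KWte} into
$$\Delta_{d} v+\left\langle\alpha_{F}, d v\right\rangle-c v+\varphi+\frac{|\nabla v|^{2}}{v}=0,$$
so that $Lv=-\varphi-\frac{|\nabla v|^2}{v}\le-\varphi=L\phi$, and the maximum principle for $L=\Delta_d+\langle\alpha_F,d\,\cdot\,\rangle-c$ (applicable because $-c>0$) gives $\phi\ge v=e^{-w}>0$. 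Your candidate $e^{w}$ yields $L(e^{w})=e^{w}\bigl(\varphi e^{w}-2c-|dw|^{2}\bigr)$, which has no exploitable sign relative to $L\phi=-\varphi$; the sentence ``one then compares $\phi$ with a multiple of $e^{w}$ \dots to force $\phi>0$'' is precisely the step that cannot be completed in that form. (Also, integrating \eqref{necessaryEQUA} gives $\int_M\varphi\,dV=c\int_M\phi\,dV$, not $-c\int_M\phi\,dV$; the conclusion $\bar\varphi<0$ is unaffected.)

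In (b), an additive shift $w_{+}=\zeta+A$ cannot produce a super-solution: wherever $\varphi>0$, the terms $\bar\varphi-\varphi$, $c$ and $-\varphi e^{w_{+}}$ are all negative, so the required inequality fails no matter how negative $A$ is. The construction must be multiplicative: with $\Delta_d v+\langle\alpha_F,dv\rangle=\varphi-\bar\varphi$ one takes $w_{+}=av+\log a$ for small $a>0$, so that $\varphi e^{w_{+}}=a\varphi e^{av}$ nearly cancels the term $a\varphi$, leaving $a\varphi(1-e^{av})-a\bar\varphi+c$, which is nonnegative exactly when $c\ge a\bar\varphi/2$ (after choosing $a$ with $|e^{av}-1|\le-\bar\varphi/(2\|\varphi\|_\infty)$). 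The resulting restriction $c\ge a\bar\varphi/2$ is not a defect to be engineered away: a super-solution genuinely need not exist for all $c<0$ (otherwise $c_{-}(\varphi)$ would always be $-\infty$, contradicting Theorem \ref{cphs}(c)), so any construction claiming to work for every $c<0$ must be wrong. Finally, your monotonicity step has the roles reversed: with the paper's sign convention a solution (or super-solution) at $c_{0}$ has defect $c-c_{0}\ge0$ at any $c\ge c_{0}$ and is therefore a \emph{super}-solution there, while the sub-solution is simply a sufficiently negative constant; the interval structure of the solvable set then follows as you intend, and the ``delicate point'' you anticipate (dominating the sub-solution) does not arise.
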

\begin{proof}
(a): Recall that the metric is of Gauduchon, the existence of smooth solution of (\ref{necessaryEQUA}) is guaranteed by standard elliptic PDE theory.

Using the substitution $v=e^{-w}$, one sees that (\ref{KWte}) has a solution $w$ if and only if there is a solution $v>0$ of
\begin{equation}\label{transformEQUA}
\Delta_{d} v+\left\langle\alpha_{F}, d v\right\rangle-c v+\varphi+\frac{|\nabla v|^{2}}{v}=0.
\end{equation}
Assume that $v$ is a positive solution of (\ref{transformEQUA}), let $\phi$ be the unique solution of (\ref{necessaryEQUA}) and let $w_{1}=\phi-v$. Then
$$
\Delta_{d} w_{1}+\left\langle\alpha_{F}, d w_{1}\right\rangle-c w_{1}=\frac{|\nabla v|^{2}}{v} \geq 0 .
$$
Simply using maximum principle we can obtain $\phi \geq v>0$. Consequently, a necessary condition for there to exist a positive solution of (\ref{transformEQUA}) is that the unique solution of (\ref{necessaryEQUA}) must be positive. This necessary condition immediately implies $\bar{\varphi}<0$ as one sees by integrating (\ref{necessaryEQUA}) since the metric is of Gauduchon.

(b): Let $v$ be the smooth solution of 
$$
\Delta_{d} v+\left\langle\alpha_{F}, d v\right\rangle=\varphi-\bar{\varphi},
$$
the solvability of this equation is guaranteed by Lemma \ref{sec2}.
Since $\left|e^{t}-1\right| \leq|t| e^{|t|}$, and since $\bar{\varphi}<0$, we can pick $a>0$ so small that
$$
\left|e^{a v}-1\right| \leq \frac{-\bar{\varphi}}{2\|\varphi\|_{\infty}} ,
$$
let $e^{b}=a$, if $c\ge \frac{a\bar{\varphi}}{2}$ and $w_{+}=a v+b$, we have
$$
\begin{aligned}
\Delta_{d} w_{+}+\left\langle\alpha_{F}, d w_{+}\right\rangle+c-\varphi e^{w_{+}}&\ge a \varphi \left(1-e^{a v}\right)-\frac{a \bar{\varphi}}{2} \geq 0 .
\end{aligned}
$$

Thus, with $c\ge \frac{a\bar{\varphi}}{2}$, we have a super solution $w_{+}$. Consequently, the critical $c_{-}(\varphi) \leq  \frac{a\bar{\varphi}}{2}<0$. For sub-solution, since $\varphi$ is bounded, we can let $w_{-}$  be a sufficiently small negative constant.

Clearly, if $w_{+}$ is a super solution for a given $c<0$, then $w_{+}$ is also a super solution for all $\tilde{c}<0$ such that $c \leq \widetilde{c}$. Therefore, there is a critical constant $-\infty \leq$ $c_{-}(\varphi) \leq 0$ such that we can construct super and sub-solutions to (\ref{KWte}) for negative $c$ with $c>c_{-}(\varphi)$ but can not construct super solution for $c<c_{-}(\varphi)$.
\end{proof}

\begin{remark}\label{re1}
Back to our problem, since we are dealing 
$$
\Delta_d w+\left\langle\alpha_F, d w\right\rangle+c(t)=\varphi e^w
$$
with $c(t)=\frac{2}{n t-t+1} \int_M s(t) d V$ and $\varphi=\frac{2 e^g}{n t-t+1} \hat{s}(t)$, here $g$ is the solution of 
$$
\Delta_d g+\left\langle\alpha_F, d g\right\rangle+\frac{2}{n t-t+1} s(t)=\frac{2}{n t-t+1} \int_M s(t) d V \text {. }
$$
So for our problem, the critical point $c_{-}(\varphi)$ in Theorem \ref{thm1} is actually $c_{-}(\frac{2 e^g}{n t-t+1}\hat{s}(t))$, we denote it as $c(\hat{s}(t))$, which means for each given smooth function $\hat{s}(t)$, there is an associated constant $c(\hat{s}(t))$. If $c(\hat{s}(t))<c(t)<0$, then there is a Hermitian metric conformal to $h$ with Gauduchon scalar curvature $\hat{s}(t)$. Whereas $c(t)<c(\hat{s}(t))$, it is impossible that $\hat{s}(t)$ is the Gauduchon scalar curvature function of a Hermitian metric in the conformal class of $h$.
\end{remark}

Then we start analysing the critical constant $c_{-}(\varphi)$, we have the following theorem by using the methods of Theorem 10.5 in \cite{KW}. In \cite{Fus}, E.Fusi proved part of (a) that when $\varphi(x) \leq 0$ for all $x \in M$ but $\varphi \not \equiv 0$, $c_{-}(\varphi)=-\infty$ by continuity path, we proved in (a) that the opposite direction is also correct, this theorem is a generalization of both Theorem 10.5 in \cite{KW} and Theorem 2.5 in \cite{Fus}.

\begin{theorem}\label{cphs}
We assume that $\varphi \in C^{\infty}(M)$.

(a) $c_{-}(\varphi)=-\infty$ if and only if $\varphi(x) \leq 0$ for all $x \in M$ but $\varphi \not \equiv 0$.

(b) If $\tilde{\varphi} \leq \varphi$, then $c_{-}(\tilde{\varphi}) \leq c_{-}(\varphi)$. Also, $c_{-}(\varphi)=c_{-}(\lambda \varphi)$ for any constant $\lambda>0$.

(c) Given $c<0$, there is a $\varphi$ with $\bar{\varphi}<0$ such that $c<c_{-}(\varphi)$. Thus, the necessary condition $\bar{\varphi}<0$ is not sufficient for solvability of (\ref{KWte}) and the critical constant $c_{-}(\varphi)<0$ can be made arbitrarily close to 0 .

(d) Let $\varphi \in L_{p}(M)$ for some $p>\operatorname{dim} M$. If there exists a constant $\alpha<0$ and $f \in L_{p}(M)$ such that $\varphi \leq f$ and $\|f-\alpha\|_{p}<-\alpha / \gamma(1-2 c)$, where $\gamma$ is the constant in (\ref{2}) with $$L v \equiv \Delta_{d} v+\left\langle\alpha_{F}, d v\right\rangle-c v$$ then there is a solution $w \in W^{2, p}(M)$ of (\ref{KWte}) with $w$ smooth when $\varphi$ is smooth.
\end{theorem}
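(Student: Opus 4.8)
The plan is to prove the four statements essentially independently, each following the corresponding step in Kazdan--Warner's Theorem 10.5 but carrying along the extra gradient term $\langle \alpha_F, d\cdot\rangle$, which is harmless because, the background metric being Gauduchon, $\int_M \langle \alpha_F, dv\rangle\, dV = 0$ for every $v$, and because the maximum principle and the $L^p$/Schauder estimates of Lemma \ref{lemma1} apply verbatim to $L$.

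For (a), one direction was recalled above (if $\varphi\le 0$, $\varphi\not\equiv 0$ then $c_-(\varphi)=-\infty$; this follows by producing a sub- and super-solution for every $c<0$ — a negative constant works as sub-solution, and solving $\Delta_d w_+ + \langle\alpha_F, dw_+\rangle = \varphi - \bar\varphi$ and shifting gives a super-solution since $\bar\varphi<0$). For the converse I would argue contrapositively: if $\varphi(x_0)>0$ somewhere, I want to show $c_-(\varphi)$ is finite. Suppose a solution $w$ of (\ref{KWte}) exists for some $c<0$. Integrating (\ref{KWte}) against $dV$ kills the two first-order terms and yields $c\,\mathrm{Vol}(M) = \int_M \varphi e^w\,dV$. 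Now multiply (\ref{KWte}) by a suitable test function, or better, use the substitution $v=e^{-w}>0$ leading to (\ref{transformEQUA}): $\Delta_d v + \langle\alpha_F, dv\rangle - cv + \varphi + |\nabla v|^2/v = 0$. Evaluating at an interior maximum point of $v$ gives $-c\,v_{\max} + \varphi + |\nabla v|^2/v \le 0$ there, hence $v_{\max}\le \varphi(\text{that point})/(-c)\cdot(\text{something})$; combined with the lower bound for $v$ coming from its minimum point (where $-c\,v_{\min}+\varphi\ge 0$, so $v_{\min}\ge \min\varphi/(-c)$ if $\min\varphi>0$, else one uses $v>0$), one extracts a bound forcing $c$ to stay above a finite threshold determined by $\varphi$. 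The cleanest route is Kazdan--Warner's: test (\ref{transformEQUA}) against $v$ itself and against $1$, use Cauchy--Schwarz on $\int |\nabla v|^2/v$ versus $\int |\nabla v|$, and derive $\big(\int\sqrt{v}\,\big)^2 \le$ (something)$\cdot\frac{1}{-c}\int\varphi_+ v$, which cannot hold when $-c$ is too large unless $\varphi_+\equiv 0$.

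For (b), monotonicity $c_-(\tilde\varphi)\le c_-(\varphi)$ when $\tilde\varphi\le\varphi$ is immediate: any super-solution $w_+$ for $(\varphi,c)$ satisfies $\Delta_d w_+ + \langle\alpha_F, dw_+\rangle + c - \tilde\varphi e^{w_+} \ge \Delta_d w_+ + \langle\alpha_F, dw_+\rangle + c - \varphi e^{w_+}\ge 0$, so it is also a super-solution for $(\tilde\varphi,c)$, and sub-solutions are handled the same way (or one just notes a small negative constant always works); hence solvability for $\varphi$ at level $c$ implies solvability for $\tilde\varphi$ at level $c$. The scaling identity $c_-(\lambda\varphi)=c_-(\varphi)$ follows because $w$ solves (\ref{KWte}) with $\varphi$ iff $w-\log\lambda$ solves it with $\lambda\varphi$ (same $c$). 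For (c), I would exhibit $\varphi$ with $\bar\varphi<0$ but $c_-(\varphi)$ arbitrarily close to $0$: take $\varphi = \varepsilon - \delta\,\chi$ where $\chi\ge 0$ is a bump concentrated near a point, tuned so $\bar\varphi<0$ while $\varphi$ is positive on a large region; by part (a) (converse direction) $c_-(\varphi)$ is finite, and a quantitative version of the argument in (a) — or directly testing a would-be solution — shows $c_-(\varphi)\to 0$ as $\varepsilon\to 0^+$ with the mass of $\delta\chi$ shrinking appropriately. For (d), I would directly construct a super-solution of the form $w_+ = v$ where $Lv = \Delta_d v + \langle\alpha_F,dv\rangle - cv = f$; by (\ref{2}) of Lemma \ref{lemma1}, $\|v\|_\infty + \|\nabla v\|_\infty \le \gamma\|f\|_p$, and writing $f = \alpha + (f-\alpha)$ one gets $\|v - v_\alpha\|_\infty \le \gamma\|f-\alpha\|_p$ where $v_\alpha\equiv \alpha/(-c)$ is the constant solving $Lv_\alpha=\alpha$; the smallness hypothesis $\|f-\alpha\|_p < -\alpha/\gamma(1-2c)$ then pins $w_+$ into a range where $\Delta_d w_+ + \langle\alpha_F,dw_+\rangle + c - \varphi e^{w_+} = f - cv + c - \varphi e^{v} \ge f - \varphi e^{v} + c(1-v) \ge 0$ using $\varphi\le f$ and the $e^{v}\le$ const bound; with a negative constant as sub-solution, Theorem \ref{thmUPPSUB} finishes, and bootstrapping gives smoothness when $\varphi$ is smooth.

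The main obstacle is the converse direction of (a) (and the closely related quantitative statement underlying (c)): producing the right integral test-function identity for (\ref{transformEQUA}) with the gradient term present, and squeezing out of it a genuine finite upper bound for the critical $c$ in terms of $\int\varphi_+$, rather than merely a non-sharp obstruction. The $\langle\alpha_F,dv\rangle$ term does not contribute to $\int_M(\cdots)\,dV$ but does appear in $\int_M v\,(\cdots)\,dV$, so one must absorb $\int_M v\langle\alpha_F, dv\rangle\,dV = -\tfrac12\int_M (\delta^h\alpha_F) v^2\,dV = 0$ (again using Gauduchon!) — this is the one place where the Gauduchon hypothesis is doing real work beyond bookkeeping, and getting that cancellation cleanly is the crux.
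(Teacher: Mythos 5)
The central gap is the converse direction of (a), which you yourself flag as the main obstacle but never close. The paper's argument is short and uses two ingredients you never invoke together: Theorem \ref{thm1}(a) shows that if (\ref{KWte}) is solvable then the \emph{unique solution} $\phi$ of the linear equation $\Delta_d\phi+\langle\alpha_F,d\phi\rangle-c\phi+\varphi=0$ must be positive (this comes from $v=e^{-w}$ and the maximum principle applied to $\phi-v$); and Theorem \ref{asymptotictheorem} shows $c\,\phi(x;c)\to\varphi(x)$ uniformly as $c\to-\infty$. Hence if $\varphi(x_0)>0$, then $\phi(x_0;c)\approx\varphi(x_0)/c<0$ for $c$ sufficiently negative, the necessary condition fails, and $c_-(\varphi)>-\infty$. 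Your proposed substitutes do not work as stated: the maximum-point evaluation of (\ref{transformEQUA}) only yields $v_{\max}\le\varphi(x_{\max})/c$, which is an upper bound on $v$ and produces no contradiction unless you already know $\varphi>0$ at the maximum point of $v$; and the claimed inequality $(\int\sqrt{v})^2\le C\,\tfrac{1}{-c}\int\varphi_+v$ is not derived and, even if true, does not obviously force $\varphi_+\equiv 0$ (a small $v$ satisfies it trivially, and you have no normalization on $v$). This is not the route Kazdan--Warner take either; they use exactly the asymptotic lemma. The gap propagates to (c): your bump-function construction appeals to "a quantitative version of the argument in (a)" that you have not established. The paper's (c) is instead an explicit reverse-engineering that needs no quantitative estimate: pick $\psi$ with $\bar\psi=0$, $\psi\not\equiv0$, and $\alpha>0$ so small that $\psi+\alpha$ still changes sign, and set $\varphi:=-\Delta_d\psi-\langle\alpha_F,d\psi\rangle+c(\psi+\alpha)$; then $\bar\varphi=c\alpha<0$, yet the solution of the necessary-condition equation is $\phi=\psi+\alpha$, which changes sign, so (\ref{KWte}) is unsolvable for this $c$.

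Part (d) is also not actually verified. The hypothesis $\|f-\alpha\|_p<-\alpha/\gamma(1-2c)$ is tailored to the paper's construction, which works on the transformed inequality (\ref{super v}) for $v=e^{-w_+}$: one solves $Lv+f+\delta=0$ with $\delta=-\alpha/(1-2c)$, applies estimate (\ref{2}) to $v-2\delta$ to get $v>\delta$, $|\nabla v|<\delta$, hence $|\nabla v|^2/v<\delta$, and then $\varphi\le f$ closes the inequality. Your direct ansatz $w_+=v$ with $Lv=f$ contains a sign slip ($\Delta_dv+\langle\alpha_F,dv\rangle=f+cv$, not $f-cv$) and the resulting condition $f+cv+c-\varphi e^{v}\ge 0$ is not implied by $\varphi\le f$ together with a bound on $e^v$ without redoing the constants; the whole point of the substitution $v=e^{-w_+}$ is to replace the exponential by the quadratic term $|\nabla v|^2/v$, which is exactly what the gradient bound in (\ref{2}) controls. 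Parts (a, forward direction) and (b) of your proposal do match the paper.
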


\begin{proof}
(a): First we show that if $\varphi(x) \leq 0$ for all $x \in M$, but $\varphi \not \equiv 0$, then (\ref{KWte}) is solvable for all $c<0$. In view of the Theorem \ref{thmUPPSUB}, the solvability of (\ref{KWte}) is equivalent to the existence of a super solution $w_{+}$. Let $$\Delta_{d} v+\left\langle\alpha_{F}, d v\right\rangle=\varphi-\bar{\varphi}$$ and note that $\bar{\varphi}<0$. Pick constants $a$ and $b$ such that $a \bar{\varphi}<c$ and $\left(e^{a v+b}-a\right)>0$. Then let $w_{+}=$ $a v+b$. Since $\varphi \leq 0$, we find that
$$
\Delta_{d} w_{+}+\left\langle\alpha_{F}, d w_{+}\right\rangle+c-\varphi e^{w_{+}}>0 .
$$
Therefore $w_{+}$ is a super solution. Consequently $c_{-}(\varphi)=-\infty$ if $\varphi \leq 0$ but $\varphi \not \equiv 0$.

Then we show that $c_{-}(\varphi)=-\infty$ also implies $\varphi(x) \leq 0$ for all $x \in M$ but $\varphi \not \equiv 0$, we prove by contradiction. Suppose that $\varphi \left(x_{0}\right)>0$ for some $x_{0} \in M$, then using Theorem \ref{asymptotictheorem}, the unique solution of (\ref{necessaryEQUA}) is negative at $x_{0}$ for $c$ sufficiently large enough negative, which contradicts to Theorem \ref{thm1}.

(b): If $w_{+}$ is a super solution for $\varphi$, then it is also a super solution for any $\tilde{\varphi} \leq \varphi$. Therefore $c_{-}(\tilde{\varphi}) \leq c_{-}(\varphi)$. To see that $c_{-}(\varphi)=c_{-}(\lambda \varphi)$ for any constant $\lambda>0$, we only have to note that if $w$ is a solution of $$\Delta_{d} w+\left\langle\alpha_{F}, d w\right\rangle+c=\varphi e^{w},$$ then $v=w-\log \lambda$ is a solution of $$\Delta_{d} v+\left\langle\alpha_{F}, d v\right\rangle+c=\lambda \varphi e^{v}. $$ 

So for prescribed $c<0$ one can solve (\ref{KWte}) given the function $\lambda \varphi$ if and only if one can solve given the function $\varphi$.

(c): With $c<0$ given, we show there is a $\varphi \in C^{\infty}(M)$ with $\bar{\varphi}<0$ for which (\ref{KWte}) is not solvable, thus $c<c_{-}(\varphi)<0$. Let $\psi \in C^{\infty}(M)$ satisfy $\bar{\psi}=0$, but $\psi \not \equiv 0$. Choose a constant $\alpha>0$ so small that $\psi+\alpha$ still changes sign. Set $\varphi$ by  $$\varphi=-\Delta_{d} \psi-\left\langle\alpha_{F}, d \psi\right\rangle+c(\psi+\alpha),$$
then $\bar{\varphi}=c \alpha<0$. But by Theorem \ref{thm1} we conclude that (\ref{KWte}) has no solution for this $\varphi$ and this $c<0$ since the unique solution of 
$$
\Delta_{d} \phi+\left\langle\alpha_{F}, d \phi\right\rangle-c \phi+\varphi=0
$$
is $\phi=\psi+\alpha$, which changes sign.

(d): In view of the Theorem \ref{thmUPPSUB},in order to prove existence for 
\begin{equation}\label{super}
\Delta_{d} w+\left\langle\alpha_{F}, d w\right\rangle+c=\varphi e^{w},
\end{equation}
 it is sufficient to prove there is a super solution $w_{+}$ of (\ref{super}), that is
\begin{equation}\label{super w}
\Delta_{d} w_{+}+\left\langle\alpha_{F}, d w_{+}\right\rangle+c-\varphi e^{w_{+}} \geq 0 .
\end{equation}
Similar to (\ref{transformEQUA}), make the change of variable $v=e^{-w_{+}}$, then $w_{+}$ will satisfy (\ref{super w}) if $v$ is a positive solution of
\begin{equation}\label{super v}
\Delta_{d} v+\left\langle\alpha_{F}, d v\right\rangle-c v+\varphi+\frac{|\nabla v|^{2}}{v} \leq 0 .
\end{equation}
Let $\delta=-\alpha /(1-2 c)$, $L v \equiv \Delta_{d} v+\left\langle\alpha_{F}, d v\right\rangle-c v$ and $v$ be the unique solution of
$$
L v +f+\delta = 0 .
$$
Observe that $\psi \equiv 2 \delta$ is a solution of $L \psi=-2 c \delta$. Thus
$$
L(v-\psi)=2c\delta-f-\delta=\alpha-f,
$$
so inequality (\ref{2}) applied to $v-\psi$ reveals that
$$
 \quad\|v-2 \delta\|_{\infty}+\|\nabla v\|_{\infty} \leq \gamma\|L(v-\psi)\|_{p}=\gamma\|\alpha-f\|_{p}<\delta.
$$
In particular,
$$
\|v-2 \delta\|_{\infty}<\delta  \text { so } v(x)>\delta \text {. }
$$
Also, $\|\nabla v\|_{\infty}<\delta$ so
$$
\frac{|\nabla v|}{v}<1 \text { and } \frac{|\nabla v|^{2}}{v}<\delta.
$$

Therefore $v(x)>\delta>0$ and (\ref{super v}) is satisfied since
$$
\Delta_{d} v+\left\langle\alpha_{F}, d v\right\rangle-c v+\varphi+\frac{|\nabla v|^{2}}{v} \leq \varphi+\delta-f-\delta\leq 0.
$$
Thus ($\ref{super}$) has a solution, which means that for a given $c$, one can solve ($\ref{super}$) for a large class of $\varphi$.

\end{proof}

\begin{remark}\label{re2}
Back to our problem. That's to say, under the assumption $c(t)<0$, $c(\hat{s}(t))=-\infty$ if and only if the nonzero function $\frac{\hat{s}(t)}{n t-t+1}$ is nonpositive. Moreover, $\int_M \frac{2 e^g}{n t-t+1} \hat{s}(t) d V<0$ is a necessary condition for $\hat{s}(t)$ to be the Gauduchon scalar curvature of a hermitian metric conformal to $h$, but it's not sufficient.  As for (d),  we do not give some application here. However it is interesting to find examples of almost Hermitian manifold $M$ and show the existence of $f$ and $\alpha$.  For example, let  $M=K\times K$ and $K$ is a Riemannian surface with negative Euler character, and then one can find some diffeomorphism $\phi: M\to M$ to make $c(\phi^*\hat{s}(t))< c(t)$ as the surface case done by Kazdan and Warner in  in \cite{KW}.  

\end{remark}

\section{Other approaches}\label{sec5}
When $c=0$, the necessary condition of the existence of
\begin{equation}\label{KWc=0}
\Delta_{d} w+\left\langle\alpha_{F}, d w\right\rangle=\varphi e^{w}
\end{equation}
could be obtained by integrating on both side, that is $\varphi$ must change sign. When $\varphi =0 $, obviously it can be solved since it's linear. The sufficient condition for the existence of solution for (\ref{KWc=0}) on manifold with real dimension greater than $2$ remains an open problem.

When $c>0$, since a family of Lipschitz functions were constructed to prove that when using variational method, the lower bound does not exist \cite{CSZ}, there are several studies try to solve this case using flow approach such as  \cite{CSZ}, \cite{Ho}, \cite{LeM} and \cite{WY}. Here we use fixed point theorem and implicit function theorem to study it under some conditions on $s(t)$ and $\hat{s}(t)$ but regardless of the sign of $c(t)$. For this part, we directly study the original equation 
\begin{equation}\label{small}
\Delta_{d} u+\langle\alpha_{F}, d u\rangle+\frac{2}{n t-t+1}s(t)= \frac{2\hat{s}(t)}{n t-t+1}e^{u}.
\end{equation}

First we use the fixed point theorem to study the situation when $s(t)$ and $\hat{s}(t)$ are close enough in $C^{0,\alpha}$ norm and $\hat{s}(t)$ is in the set
$$
 \mathcal{I}=\left\{\varphi \in C^{\infty}(M) \mid Lu = \Delta_du +\langle\alpha_F, du\rangle- \frac{2}{n t-t+1} \varphi u, L \ is \ invertible.  \right\}.
$$
Notice that the $\mathcal{I}$ is a $G_{\delta}$-dense set by Fredholm alternative \cite[Theorem 5.15]{GT} which is related to the eigenvalues of $L$. We used the methods of Lemma 3.1 in \cite{LeM}.
\begin{theorem}\label{smallosc}
\textbf{Small Oscillation Case}
Let $(M,J,h)$ be an almost Hermitian manifold of real dimension $2 n$ endowed with a Hermitian metric.
Set  $B_{\varepsilon_{0}}( s(t))=\left\{u \in C^{0, \alpha}(M) \mid\| s(t)-u\|_{C^{0, \alpha}}<\varepsilon_{0}\right\} $, then there exists $\varepsilon_{0}>0$ and a $G_{\delta}$-dense set
$$ \mathcal{A} = \mathcal{I} \cap B_{\varepsilon_{0}}( s(t)) $$
such that if $ \hat{s}(t) \in \mathcal{A}$, then $\hat{s}(t)$ is the Gauduchon scalar curvatures of a Hermitian metric conformal to $h$. 
\end{theorem}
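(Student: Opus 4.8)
The plan is to set this up as a fixed-point problem for a contraction mapping on a small ball around a reference solution. First I would observe that, by hypothesis, $s(t)\in\mathcal{I}$ trivially: the equation $\Delta_d u+\langle\alpha_F,du\rangle-\tfrac{2}{nt-t+1}s(t)\,u=0$ has solution $u\equiv -\tfrac{nt-t+1}{2}\cdot(\text{constant})$ only if\ldots actually the cleaner starting point is that $u_0\equiv 0$ solves \eqref{small} when $\hat s(t)=s(t)$, provided the background metric has been chosen so that the constant functions land correctly; more robustly, one uses that $\hat s(t)\in\mathcal I$ guarantees the linearization
\[
L_{\hat s}v=\Delta_d v+\langle\alpha_F,dv\rangle-\tfrac{2}{nt-t+1}\hat s(t)\,v
\]
is an invertible operator $C^{2,\alpha}(M)\to C^{0,\alpha}(M)$, hence has a bounded inverse with norm $\|L_{\hat s}^{-1}\|\le K$ for some $K$ depending on $\hat s(t)$. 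The point of intersecting with the ball $B_{\varepsilon_0}(s(t))$ is that on this ball the operator norm bound $K$ can be taken uniform (the set of invertible operators is open and inversion is continuous), so $\varepsilon_0$ can be chosen once and for all.

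Next I would rewrite \eqref{small} in fixed-point form. Writing $\hat s=\hat s(t)$ and $\beta=\tfrac{2}{nt-t+1}$, seek $u$ with
\[
L_{\hat s}u=\beta\hat s\,(e^u-1-u)-\beta s(t)+\beta\hat s\cdot 0,
\]
i.e.\ move the linear part of $\beta\hat s e^u$ to the left and collect the rest. Concretely, $u$ solves \eqref{small} iff
\[
u=\Phi(u):=L_{\hat s}^{-1}\Big(\beta\hat s\,(e^u-1-u)+\beta(\hat s-s(t))\Big).
\]
Then I would show $\Phi$ maps a small closed ball $\overline{B_\rho}\subset C^{2,\alpha}(M)$ into itself and is a contraction there. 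For the self-map property: $\|\Phi(u)\|_{2,\alpha}\le K\big(\|\beta\hat s\|_{0,\alpha}\,\|e^u-1-u\|_{0,\alpha}+\|\beta\|\,\|\hat s-s(t)\|_{0,\alpha}\big)$; the first term is $O(\rho^2)$ by the quadratic vanishing of $e^u-1-u$ at $u=0$ together with the Banach-algebra property of $C^{0,\alpha}$, and the second is $\le K\|\beta\|\,\varepsilon_0$; choosing first $\rho$ small, then $\varepsilon_0$ small relative to $\rho$, gives $\|\Phi(u)\|_{2,\alpha}\le\rho$. For the contraction property one estimates $\|\Phi(u_1)-\Phi(u_2)\|_{2,\alpha}\le K\|\beta\hat s\|_{0,\alpha}\,\|e^{u_1}-u_1-(e^{u_2}-u_2)\|_{0,\alpha}$ and uses that $g(u)=e^u-u$ has $g'(0)=0$, so its Lipschitz constant on $\overline{B_\rho}$ is $O(\rho)<1$ for $\rho$ small. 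The Banach fixed point theorem then yields a solution $u\in C^{2,\alpha}(M)$, and elliptic bootstrap (Calder\'on--Zygmund plus Schauder, as in Theorem \ref{thmUPPSUB}) upgrades it to $u\in C^\infty(M)$ when $\hat s(t)$ is smooth. Finally, $\mathcal A=\mathcal I\cap B_{\varepsilon_0}(s(t))$ is $G_\delta$-dense because $\mathcal I$ is $G_\delta$-dense (Fredholm alternative, as noted) and $B_{\varepsilon_0}(s(t))$ is open.

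The main obstacle I anticipate is getting the constants in the right order and, in particular, making the bound $\|L_{\hat s}^{-1}\|\le K$ genuinely uniform over $\mathcal A$. Pointwise in $\hat s(t)\in\mathcal I$ invertibility is given, but the resulting $K$ a priori depends on $\hat s(t)$; the fix is the standard perturbation fact that if $L_{s(t)}$ were invertible one could perturb, but $L_{s(t)}$ need not be invertible — so instead one argues that for each fixed $\hat s_*(t)\in\mathcal I$ there is a neighborhood on which $K$ is controlled, and then the theorem is really asserting existence of \emph{some} $\varepsilon_0$ and the $G_\delta$-dense $\mathcal A$ inside $B_{\varepsilon_0}(s(t))$, i.e.\ one may shrink to a single such neighborhood around a chosen $\hat s_*(t)$ near $s(t)$, or equivalently absorb the dependence by noting $\varepsilon_0$ is allowed to depend on the ambient data. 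A secondary technical point is verifying the Banach-algebra and composition estimates for $C^{0,\alpha}$ (that $u\mapsto e^u-1-u$ is $C^1$ from $C^{2,\alpha}$, actually even $C^{0,\alpha}$, into $C^{0,\alpha}$ with the stated quadratic bound near $0$), which is routine but must be stated carefully to make the contraction estimate airtight.
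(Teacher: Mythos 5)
Your proposal is correct and follows essentially the same route as the paper: rewrite \eqref{small} as $L_{\hat s}u=\frac{2}{nt-t+1}\bigl[\hat s(t)-s(t)+\hat s(t)(e^u-1-u)\bigr]$, invert $L_{\hat s}$ using $\hat s(t)\in\mathcal I$, and run a Banach contraction on a small ball in $C^{2,\alpha}(M)$, with the smallness of $\|\hat s(t)-s(t)\|_{C^{0,\alpha}}$ absorbing the inhomogeneous term. The uniformity issue you flag for $\|L_{\hat s}^{-1}\|$ over $\mathcal A$ is real and is present in the paper as well, which only remarks that $\varepsilon_0$ ``depends on both the invertibility and the boundedness of $L$''; your suggested resolution (localizing around a fixed $\hat s_*(t)\in\mathcal I$) is the honest reading of the statement.
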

\begin{proof}
Rewrite (\ref{small}) as
$$
Lu= \frac{2}{nt-t+1} [\hat{s}(t)- s(t)-\hat{s}(t)\left(1+ u-e^{u}\right)].
$$

Choose $\varepsilon$ small enough such that if  $\|u\|_{C^{0, \alpha}}<\varepsilon$, $$| e^{ u}-1-u | \leqslant \varepsilon^2$$ and 
$$
\left\| \hat{s}(t)\left(e^{ u}-1-u\right)\right\|_{C^{0, \alpha}}<C\varepsilon ^{2}.
$$
 If $ \hat{s}(t) \in  \mathcal{I}$,  then  $L$ is invertible. We consider the operator
$$
T(u)=L^{-1}(\frac{2}{n t-t+1}[\hat{s}(t)-s(t)-\hat{s}(t)\left(1+u-e^u\right)]),
$$
Choose $\| \hat{s}(t)- s(t)\|_{C^{0, \alpha}}<\varepsilon_{0} $ small enough such that,  by Schauder's estimates, 
$$
\|T(u)\|_{C^{2, \alpha}} \leq \|L^{-1}\| \|\hat{s}(t)-s(t)-\hat{s}(t)\left(1+u-e^u\right)\|_{C^{0, \alpha}}< \varepsilon,
$$
which shows that
$$
T\left(B_{\varepsilon}\right) \subset B_{\varepsilon}
$$
where $$B_\varepsilon=\{u\in C^{2,\alpha}(M) \mid \|u\|_{ C^{2,\alpha}}<\varepsilon\}. $$
Moreover,
$$
\|T(u)-T(v)\|_{C^{2, \alpha}}=\left\|O\left(u^{2}\right)-O\left(v^{2}\right)\right\|_{C^{2, \alpha}} \leq C \varepsilon\|u-v\|_{C^{2, \alpha}}.
$$
So if $\varepsilon$ is small enough, $C\varepsilon<1$ and then $T$ is contracted, we do have a fixed point thus a solution to (\ref{small}).   Notice $\varepsilon_0$ depends on both the invertibility and the boundedness  of $L$. 

In sum, if $\hat{s}(t)\in \mathcal{A}$, then $\hat{s}(t)$ is the Gauduchon scalar curvatures of a Hermitian metric conformal to $h$.
\end{proof}

We can also use implicit function theory to study the situation when the $\mathcal{C}^{0, \alpha}$ norm of  $s(t)$, $\hat{s}(t)$ are both smaller than a constant. In this case we don't need to choose $\hat{s}(t) \in \mathcal{I}.$
\begin{theorem}\label{implicit}
\textbf{Implicit Theorem Approach}
Let $(M,J,h)$ be an almost Hermitian manifold of real dimension $2 n$ endowed with a Hermitian metric. Then there exists $\varepsilon>0$, depending just on $M$ and $h$, such that, if $\left\|s(t)\right\|_{\mathcal{C}^{0, \alpha}(M)}$ and $\left\|\hat{s}(t)\right\|_{\mathcal{C}^{0, \alpha}(M)}$ are both less than $\varepsilon$, for some $\alpha \in(0,1)$, then $\hat{s}(t)$ is the Gauduchon scalar curvatures of a Hermitian metric conformal to $h$. 
\end{theorem}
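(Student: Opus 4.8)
The plan is to solve the conformal equation \eqref{seu} by the implicit function theorem in H\"older spaces. Put $k:=nt-t+1\neq0$ and define $\mathcal{F}\colon C^{2,\alpha}(M)\times C^{0,\alpha}(M)\to C^{0,\alpha}(M)$ by
$$\mathcal{F}(u,\sigma)=\Delta_d u+\langle\alpha_F,du\rangle+\frac{2}{k}\,s(t)-\frac{2}{k}\,\sigma\,e^{u},$$
so that a conformal metric $e^{u}h$ has Gauduchon scalar curvature $\hat s(t)$ precisely when $\mathcal{F}(u,\hat s(t))=0$. The elementary but crucial point is that $\mathcal{F}(0,s(t))=0$: the reference metric $h$ itself realizes $\sigma=s(t)$. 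Since $\|s(t)\|_{C^{0,\alpha}}$ and $\|\hat s(t)\|_{C^{0,\alpha}}$ are both $<\varepsilon$, one has $\|\hat s(t)-s(t)\|_{C^{0,\alpha}}<2\varepsilon$, so it suffices to solve $\mathcal{F}(u,\sigma)=0$ for every $\sigma$ in the $C^{0,\alpha}$-ball of radius $2\varepsilon$ about $s(t)$, and I would do this by applying the implicit function theorem at the base point $(0,s(t))$.

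The first step is to analyze the partial linearization $D_u\mathcal{F}(0,s(t))[w]=\Delta_d w+\langle\alpha_F,dw\rangle-\frac{2}{k}s(t)\,w=:Lw$. The operator $L$ is a zeroth-order perturbation of the Chern Laplacian $\Delta^{Ch}=\Delta_d+\langle\alpha_F,d\cdot\rangle$, which by Lemma \ref{sec2} has one-dimensional kernel and cokernel, both equal to the constants. Because $L(c)=-\frac{2}{k}s(t)\,c$ is not identically zero for $c\neq0$ unless $s(t)\equiv0$, perturbation theory for the algebraically simple zero eigenvalue of $\Delta^{Ch}$ shows that it moves to $-\frac{2}{k\operatorname{Vol}(M)}\int_M s(t)\,dV+O(\|s(t)\|_{C^{0,\alpha}}^{2})=-\frac{2}{k\operatorname{Vol}(M)}\Gamma(t)+O(\|s(t)\|^{2})$, where $\Gamma(t)$ is the Gauduchon degree. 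Hence, as soon as $\Gamma(t)\neq0$ — equivalently $c(t)\neq0$, the cases $c(t)=0$ being the genuinely hard ones of this section — the map $L\colon C^{2,\alpha}(M)\to C^{0,\alpha}(M)$ is an isomorphism, with $\|L^{-1}\|$ controlled by $|\Gamma(t)|$, the first nonzero eigenvalue of $\Delta^{Ch}$, and the Schauder constants of $(M,h)$. (An equivalent way to organize this, keeping the additive constant explicit as in the proof of Theorem \ref{smallosc} and in \cite{LeM}, is to restrict $u$ to $C^{2,\alpha}_{0}(M)=\{v:\int_M v\,dV=0\}$ and adjoin a real parameter $b$, so that the constant cokernel direction is supplied by $b$.) With $L$ invertible, the implicit function theorem produces, for all $\sigma$ with $\|\sigma-s(t)\|_{C^{0,\alpha}}$ below a threshold $\delta$ depending on $\|L^{-1}\|$ and the locally uniform second $u$-derivatives of $\mathcal{F}$, a unique small solution $u(\sigma)\in C^{2,\alpha}(M)$; choosing $\varepsilon\le\delta/2$ covers $\sigma=\hat s(t)$, and a Calder\'on--Zygmund/Schauder bootstrap promotes $u$ to $C^{\infty}(M)$ when $\hat s(t)$ is smooth.

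The step I expect to be the main obstacle is exactly controlling the invertibility of $L$ together with the size of $\|L^{-1}\|$: since the leading part $\Delta^{Ch}$ is never invertible, one has to keep careful track of how the admissible $\varepsilon$ depends on the Gauduchon degree and on the spectral gap of $\Delta^{Ch}$, so that the claimed dependence of $\varepsilon$ on $(M,J,h)$ alone is justified. A route that absorbs some of this bookkeeping is a continuity argument on the ball $\{\|\sigma-s(t)\|_{C^{0,\alpha}}<2\varepsilon\}$: the set of $\sigma$ for which $\mathcal{F}(\cdot,\sigma)=0$ is solvable is open, by the implicit function theorem provided the linearization at a solution $u$ stays invertible — its near-zero eigenvalue is $\approx-\frac{2}{k\operatorname{Vol}(M)}\Gamma(t)$ by the same perturbation computation, hence nonzero once $c(t)\neq0$ and $u$ is a priori controlled — and closed, by a priori estimates, where for small $\|\sigma\|$ one bootstraps $\|u\|_{C^{2,\alpha}}$ small using $\int_M\sigma e^{u}\,dV=\Gamma(t)$ to fix the additive constant; since the set contains $s(t)$, it is then the whole ball. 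Everything else — Schauder estimates, the implicit function theorem, and elliptic regularity — is standard.
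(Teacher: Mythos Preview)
Your proposal linearizes at the base point $(u,\sigma)=(0,s(t))$ with $s(t)$ held fixed, so the linearized operator is $L=\Delta^{Ch}-\tfrac{2}{k}s(t)$. As you correctly note, invertibility of $L$ on all of $C^{2,\alpha}$ fails unless the perturbed zero eigenvalue $\approx -\tfrac{2}{k\operatorname{Vol}(M)}\Gamma(t)$ is nonzero, i.e.\ unless $c(t)\neq 0$. This is a genuine gap: the theorem is stated \emph{regardless of the sign of $c(t)$} (indeed, the point of this section is precisely to cover the cases $c(t)\ge 0$), so excluding $\Gamma(t)=0$ does not prove what is claimed. Moreover, even when $\Gamma(t)\neq 0$, your bound $\|L^{-1}\|\sim |\Gamma(t)|^{-1}$ couples badly with the hypothesis $\|s(t)\|_{C^{0,\alpha}}<\varepsilon$: since $|\Gamma(t)|\le \operatorname{Vol}(M)\,\|s(t)\|_{C^0}$, forcing $\|s(t)\|$ small forces $|\Gamma(t)|$ small and $\|L^{-1}\|$ large, so it is not clear that the implicit-function radius you obtain is at least $2\varepsilon$ independently of $s(t)$. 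The continuity argument you sketch does not resolve this, because the a~priori closedness step still needs uniform control of the inverse along the path.

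The paper avoids this difficulty by a simple change of base point: it treats $s(t)$ as a \emph{third} variable, works on the constrained set $\mathcal{X}=\{(u,\hat s,s):\int_M(\hat s\,e^{u}-s)\,dV=0\}$ mapping into the mean-zero space $\mathcal{Y}=\{f:\int_M f\,dV=0\}$, and linearizes at $(u,\hat s,s)=(0,0,0)$ rather than at $(0,s(t))$. At that point the $u$-linearization is the bare Chern Laplacian $\Delta_d+\langle\alpha_F,d\cdot\rangle$, with no zeroth-order term, which by Lemma~\ref{sec2} is an isomorphism between the mean-zero spaces with inverse bounded only by spectral and Schauder constants of $(M,h)$. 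The implicit function theorem then gives a neighbourhood of $(0,0)$ in the $(\hat s,s)$-plane, and the hypothesis $\|s(t)\|_{C^{0,\alpha}},\|\hat s(t)\|_{C^{0,\alpha}}<\varepsilon$ places the actual data in that neighbourhood; no hypothesis on the sign or nonvanishing of $c(t)$ is needed. Your parenthetical about restricting to $C^{2,\alpha}_0(M)$ and adjoining a constant is a step in the right direction, but the decisive move you are missing is to let $s(t)$ vary and to linearize where it vanishes.
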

\begin{proof}
We use the method of Theorem 5.9 in \cite{ACS}.

Still  consider the equation \eqref{small}. Fix $\alpha \in(0,1)$.  Consider the Banach manifolds
$$
\mathcal{X}:=\left\{(u, \hat{s}(t),s(t)) \in C^{2,\alpha}(M)\times  C^{0,\alpha}(M)\times C^{0,\alpha}(M)\mid   \int_{M}  \hat{s}(t) e^{u}-s(t) dV=0\right\} ,
$$
 and 
$$
\mathcal{Y}:=\left\{u \in C^{0, \alpha}(M) \mid \int_{M} u d V=0\right\} .
$$
Then the map
$$
F: \mathcal{X} \rightarrow \mathcal{Y}, \quad F(u, \hat{s}(t), s(t))=\Delta_{d} u+\left\langle\alpha_{F}, d u\right\rangle + \frac{2}{n t-t+1}(s(t)-\hat{s}(t) e^{u}).
$$
satisfies $F(0,0,0)=0$ and 
$$
\left.\frac{\partial F}{\partial u}\right|_{(0,0,0)}=\Delta_{d} +\left\langle\alpha_{F}, d \, \cdot \,\right\rangle. 
$$
As show in Lemma \ref{sec2},   $\frac{\partial F}{\partial u}|_{(0,0,0)} $ is invertible on $ \mathcal{Y}$. Hence, by applying the implicit function theorem, there exists a neighbourhood $U$ of $(0,0)$ in $C^{0, \alpha}(M) \times C^{0, \alpha}(M)$ and a $C^{1}$ function $\tilde{u}$ such that
$$
F(\tilde{u}(\hat{s}(t),s(t)),\hat{s}(t),s(t))=0 .
$$
Take $\varepsilon>0$, depending on $M$ and $h$, such that
$$
\{\hat{s}(t) \mid \left\|\hat{s}(t)\right\|_{C^{0, \alpha}(M )}<\varepsilon\} \times\left\{ s(t)  \mid\| s(t)\|_{C^{0, \alpha}(M )}<\varepsilon\right\} \subset U .
$$
Then since $\left(\hat{s}(t), s(t)\right) \in U$, take $u:=\tilde{u}\left(\hat{s}(t), s(t)\right) \in C^{2, \alpha}(M )$ as above. By regularity, $u$ belongs in fact to $C^{\infty}(M )$. 
\end{proof}

\end{document}